\font\black=cmbx10 \font\sblack=cmbx7 \font\ssblack=cmbx5 \font\blackital=cmmib10  \skewchar\blackital='177
\font\sblackital=cmmib7 \skewchar\sblackital='177 \font\ssblackital=cmmib5 \skewchar\ssblackital='177
\font\sanss=cmss12 \font\ssanss=cmss8 scaled 900 \font\sssanss=cmss8 scaled 600 \font\blackboard=msbm10
\font\sblackboard=msbm7 \font\ssblackboard=msbm5 \font\caligr=eusm10 \font\scaligr=eusm7 \font\sscaligr=eusm5
\font\bsymb=cmsy10 scaled\magstep2
\def\all#1{\setbox0=\hbox{\lower1.5pt\hbox{\bsymb
       \char"38}}\setbox1=\hbox{$_{#1}$} \box0\lower2pt\box1\;}
\def\exi#1{\setbox0=\hbox{\lower1.5pt\hbox{\bsymb \char"39}}
       \setbox1=\hbox{$_{#1}$} \box0\lower2pt\box1\;}
\def\sss#1{{\fam\ssfam\relax#1}}
\def\pmb#1{\setbox0\hbox{${#1}$} \copy0 \kern-\wd0 \kern.2pt \box0}
\def\pmbb#1{\setbox0\hbox{${#1}$} \copy0 \kern-\wd0
      \kern.2pt \copy0 \kern-\wd0 \kern.2pt \box0}
\def\pmbbb#1{\setbox0\hbox{${#1}$} \copy0 \kern-\wd0
      \kern.2pt \copy0 \kern-\wd0 \kern.2pt
    \copy0 \kern-\wd0 \kern.2pt \box0}
\def\pmxb#1{\setbox0\hbox{${#1}$} \copy0 \kern-\wd0
      \kern.2pt \copy0 \kern-\wd0 \kern.2pt
      \copy0 \kern-\wd0 \kern.2pt \copy0 \kern-\wd0 \kern.2pt \box0}
\def\pmxbb#1{\setbox0\hbox{${#1}$} \copy0 \kern-\wd0 \kern.2pt
      \copy0 \kern-\wd0 \kern.2pt
      \copy0 \kern-\wd0 \kern.2pt \copy0 \kern-\wd0 \kern.2pt
      \copy0 \kern-\wd0 \kern.2pt \box0}
\mathchardef\za="710B  
\mathchardef\zb="710C  
\mathchardef\zg="710D  
\mathchardef\zd="710E  
\mathchardef\zve="710F 
\mathchardef\zz="7110  
\mathchardef\zh="7111  
\mathchardef\zvy="7112 
\mathchardef\zi="7113  
\mathchardef\zk="7114  
\mathchardef\zl="7115  
\mathchardef\zm="7116  
\mathchardef\zn="7117  
\mathchardef\zx="7118  
\mathchardef\zp="7119  
\mathchardef\zr="711A  
\mathchardef\zs="711B  
\mathchardef\zt="711C  
\mathchardef\zu="711D  
\mathchardef\zvf="711E 
\mathchardef\zq="711F  
\mathchardef\zc="7120  
\mathchardef\zw="7121  
\mathchardef\ze="7122  
\mathchardef\zy="7123  
\mathchardef\zf="7124  
\mathchardef\zvr="7125 
\mathchardef\zvs="7126 
\mathchardef\zf="7127  
\mathchardef\zG="7000  
\mathchardef\zD="7001  
\mathchardef\zY="7002  
\mathchardef\zL="7003  
\mathchardef\zX="7004  
\mathchardef\zP="7005  
\mathchardef\zS="7006  
\mathchardef\zU="7007  
\mathchardef\zF="7008  
\mathchardef\zW="700A  
\newcommand{\be}{\begin{equation}}
\newcommand{\ee}{\end{equation}}
\newcommand{\bea}{\begin{eqnarray}}
\newcommand{\eea}{\end{eqnarray}}
\newcommand{\beas}{\begin{eqnarray*}}
\newcommand{\eeas}{\end{eqnarray*}}
\newtheorem{defi}{Definition}
\newtheorem{thm}{Theorem}
\newtheorem{prop}{Proposition}
\newtheorem{lem}{Lemma}
\newtheorem{cor}{Corollary}
\newtheorem{exa}{Example}
\newtheorem{rema}{Remark}
\newcommand{\op}[1]{\!\!\mathop{\rm ~#1}\nolimits}
\DeclareMathOperator{\im}{im}
\DeclareMathOperator{\Hom}{Hom}
\DeclareMathOperator{\End}{End}
\DeclareMathOperator{\Aut}{Aut}
\DeclareMathOperator{\id}{id}
\newcommand{\degr}{\widetilde}
\DeclareMathOperator{\iHom}{\sss{Hom}}
\DeclareMathOperator{\iEnd}{\sss{End}}
\DeclareMathOperator{\iAut}{\sss{Aut}}
\DeclareMathOperator{\gt}{^{\zG\!\!\op{t}}\!}
\DeclareMathOperator{\transp}{^{\op{t}}\!}
\DeclareMathOperator{\tr}{tr}
\DeclareMathOperator{\gtr}{\zG\!\tr}
\DeclareMathOperator{\detinv}{\det\,\!\!^{-1}}
\DeclareMathOperator{\ber}{Ber}
\DeclareMathOperator{\gber}{\zG\!\ber}
\newcommand{\evp}{_{\bar{0}}} 
\newcommand{\odp}{_{\bar{1}}} 
\DeclareMathOperator{\qi}{i}
\DeclareMathOperator{\qj}{j}
\DeclareMathOperator{\qk}{k}
\newcommand{\cK}{\mathcal{K}}
\newcommand{\cL}{\mathcal{ L}}
\newcommand{\f}{\mathfrak}
\newcommand{\N}{\mathbb{N}}
\newcommand{\Z}{\mathbb{Z}}
\newcommand{\R}{\mathbb{R}}
\newcommand{\K}{\mathbb{K}}
\newcommand{\C}{\mathbb{C}}
\newcommand{\qH}{\mathbb{H}}
\newcommand{\I}{\mathbb{I}}
\newcommand{\la}{\langle}
\newcommand{\ra}{\rangle}
\newcommand{\lp}{\left(}
\newcommand{\rp}{\right)}
\newcommand{\lpq}{\left[}
\newcommand{\rpq}{\right]}
\newcommand*{\EnsQuot}[2]%
{\ensuremath{%
    #1/\!\raisebox{-.65ex}{\ensuremath{{#2}}}}}
\newcommand{\vect}{\mathbf}
\newcommand{\p}{\partial}
\newcommand{\ts}{\times}
\newcommand{\opl}{\oplus}
\newcommand{\ots}{\otimes}
\newcommand{\raa}{\rightarrow}
\newcommand{\bemaps}{\[ \begin{array}{lccc}}
\newcommand{\eemaps}{\end{array}\]}
\newcommand{\cds}{ \! \cdots  \!}
\newcommand{\Top}{\rule{0pt}{3ex}}
\begin{document}

\title{Cohomological Approach to the Graded Berezinian}
\date{}
\author{Tiffany Covolo}
\address{Tiffany Covolo, University of Luxembourg, Mathematics Research Unit,
6, rue Richard Coudenhove-Kalergi, L-1359 Luxembourg,
Grand Duchy of Luxembourg}
\email{tiffany.covolo@uni.lu}
\vspace{2mm}
\noindent
\maketitle \thispagestyle{empty}

	\begin{abstract}
We develop the theory of linear algebra over a $(\Z_2)^n$-commutative algebra ($n\in \N$), which includes the well-known super linear algebra as a special case ($n=1$). Examples of such graded-commutative algebras are the Clifford algebras, in particular the quaternion algebra $\qH$.
Following a cohomological approach, we introduce analogues of the notions of trace and determinant. Our construction reduces in the classical commutative case to the coordinate-free description of the determinant by means of the action of invertible matrices on the top exterior power, and in the supercommutative case it coincides with the well-known cohomological interpretation of the Berezinian.
\medskip \\
{\it Mathematics Subject Classification (2010).} 16W50, 17A70, 11R52, 15A15, 15A66, 16E40.\\
\medskip
{\it Keywords.} Graded linear algebra, graded trace and Berezinian, Quaternions, Clifford algebra.
	\end{abstract}

\section*{Introduction}

Remarkable series of algebras, such as the algebra of quaternions and,
more generally, \emph{Clifford algebras} turn out to be graded-commutative.
Originated in \cite{AM} and \cite{AM1}, this idea was developed in \cite{MO} and \cite{OM}.
The grading group in this case is $(\Z_2)^{n+1}$, where $n$ is the number of generators,
and the graded-commutativity reads as
\be \label{z2ncomm}a \cdot b=(-1)^{\la \degr{a} , \degr{b} \ra} b\cdot a\ee
where $\degr{a},\degr{b}\in (\Z_2)^{n+1}$ denote the degrees of the respective homogeneous elements $a$ and $b$, and $\la.,.\ra : (\Z_2)^{n+1}\ts (\Z_2)^{n+1} \to \Z_2$ is the standard scalar product of binary $(n+1)$-vectors
(see Section~\ref{Sec1}).
This choice of the graded-commutativity has various motivations.
First, it is the intuitive extension of the well-known superalgebra, which corresponds to this $(\Z_2)^n$-commutativity for $n=1$, $\la .\,,.\ra$ being in this case just classical multiplication.
Secondly, it was proved in \cite{OM} that
such $(\Z_2)^n$-commutativity is universal among graded-commutative algebras.
That is, \emph{if $\zG$ is a finitely generated Abelian group, then for an arbitrary
 $\zG$-graded-commutative algebra $A$ with graded-commutativity of the form
$a\cdot b= (-1)^{\zb(\degr{a},\degr{b})}b\cdot a$,  with $\zb:\zG\ts\zG \to \Z_2$  a bilinear symmetric map,
it exists $n\in \N$ such that $A$ is $(\Z_2)^n$-commutative (in the sense of (\ref{z2ncomm}))}.

First steps towards the $(\Z_2)^n$-graded version of linear algebra were done in \cite{COP}.
The notion of graded trace for all endomorphisms
and that of graded Berezinian for $0$-degree automorphisms
were introduced in the most general framework of
an arbitrary free module (of finite rank) over a $(\Z_2)^n$-commutative algebra.

In this paper, we develop a cohomological approach to the notion
of graded Berezinian and graded trace.
In the super case, this approach is originally due to O. V. Ogievetskii and  I. B. Penkov
(\cite{OP84}), but we will mostly refer to the description given in \cite{Man88}.
Similarly to this latter, we define a graded analogue of the Koszul complex
and the graded Berezinian module associated to a given
free module of finite rank. We believe this to be the first step towards the conception of a generalization of the Berezinian integral over a possible multi-graded (i.e. $(\Z_2)^n$-graded) manifold.

The paper is organized as follows.
We recall the basic notions of graded linear algebra in Section \ref{Sec1} and derive
the graded matrix calculus in Section \ref{Sec2}.
In Section \ref{Sec3}, we present our first main result, a cohomological
interpretation of the graded Berezinian.
In Section \ref{Sec4}, we give a similar description of the graded trace.
It is worth noticing that the cohomological description of the graded trace
of \emph{arbitrary even matrices} leads to interesting restrictions for the
grading group $(\Z_2)^n$, namely $n$ has to be \emph{odd}.
Furthermore, the parity changing operator has to be chosen in a canonical way
and corresponds to the element $(1,1,\ldots,1)$ of $(\Z_2)^n$.
\medskip

We have to note that there is an alternative approach to the generalization of superalgebras and related notions, which makes use of category theory. This approach  follows from results by Scheunert in \cite{Sch} (in the Lie algebras setting) and Nekludova (in the commutative algebra setting). An explicit description of the results of the latter can be found in \cite{SoS}.
This other method to treat the problem 
and its consequences in the $(\Z_2)^n$-commutative case, will be the object of a separate work.
\medskip
\medskip

{\bf Acknowledgments}.
The author is pleased to thank Norbert Poncin whose suggestions initiate the paper, Jean-Philippe Michel for enlightening discussions and Dimitry Leites
who made her aware of Nekludova's work.
The author is grateful to Valentin Ovsienko for the contributions he gave through the development and the finalization of the paper.

The author thanks the Luxembourgian NRF for support via AFR PhD grant 2010-1 786207.
\medskip
\medskip

\section{Graded linear algebra}\label{Sec1}

In this section, we give a brief survey of the main notions
of linear algebra over graded-commutative algebras.

Consider an Abelian group
$(\zG, +)$ endowed with a symmetric bi-additive map
$$
\la \;,\;\ra:\zG\times\zG\to\Z_2\,.
$$
We call $\zG$ the \emph{grading group}.
This group admits a natural splitting $\zG=\zG\evp\opl\zG\odp$, where
$\zG\evp$ is the subgroup characterized by $\la\zg,\zg\ra=0$ for all $\zg\in\zG\evp$,
and where  $\zG\odp$ is the set characterized by $\la\zg,\zg\ra=1$ for all $\zg\in\zG\odp$.
We call $\zG\evp$ and $\zG\odp$ the \emph{even subgroup} and \emph{odd part}, respectively.

In this paper, we restrict the considerations to the case $\zG\!=(\Z_2)^n$,
for some fixed $n\in \mathbb{N}$, equipped with the standard scalar product
$$
\la x,y\ra=
\sum_{1\leq{}i\leq{}n}x_iy_i
$$
of $n$-vectors, defined over $\Z_2$.
Our main example are the Clifford algebras equipped with the grading described in Example \ref{Clif} (see next section).
\medskip

\subsection{Graded-Commutative Algebras}

A {\it graded vector space} is a direct sum
$$V=\bigoplus_{\zg\in\zG}V^{\zg}$$
of vector spaces $V^{\zg}$ over a
commutative field $\K$ (that we always assume of characteristic 0).
An \emph{endomorphism} of $V$ is a $\K$-linear map from $V$ to $V$
that preserves the degree; we denote by $\End_{\K}(V)$ the space of endomorphisms.

\medskip
A {\it $\zG$-graded algebra} is an algebra $A$ which has a structure of a $\zG$-graded vector space
$
A=\bigoplus_{\zg \in\zG }A^{\zg}
$
such that the operation of multiplication respects the grading:
$$A^{\za}A^{\zb}\subset A^{\za+\zb}\;, \quad \forall\za,\zb\in\zG\;.$$
We always assume $A$ \emph{associative} and \emph{unital}.

An element $a\in A^{\zg}$ is called \emph{homogeneous of degree} $\zg$.
For every homogeneous element $a$, we denote by $\degr{a}$ its degree.
Because of the even-odd splitting of the grading group,
one also has
$$A=A\evp\oplus{}A\odp,$$
where $a\in{}A\evp$ or $A\odp$ if $\degr{a}$ is even or odd, respectively.
For simplicity, in most formulas below the involved elements  are assumed to be homogeneous. These expressions are then extended to arbitrary elements by linearity.

\medskip
A $\zG\!$-graded algebra is called $\zG\!$-\emph{commutative} if
\be \label{GCom} a\,b = (-1)^{\la \degr{a},\degr{b}\ra}\,b\,a .\ee
In particular, every odd element squares to zero.
\medskip

\begin{exa}\label{Clif}{\rm
As we have mentioned in the Introduction, a Clifford algebra $\op{Cl}_n$ of $n$ generators (over $\R$ or $\C$) is a $(\Z_2)^{n+1}$-commutative algebra (\cite{OM}). The grading is given on the generators $e_i$ of $\op{Cl}_n$ as follows
\beas \degr{e}_i = (0, \ldots, 0, 1, 0, \ldots 0,1) \in (\Z_2)^{n+1} \eeas
where $1$ is at the $i$-th and at the last position.
}\end{exa}
\medskip

\subsection{Graded Modules}

A \emph{graded left module} $M$ over a $\zG\!$-commutative algebra $A$
is a left $A$-module with a $\zG$-graded vector space structure $M=\bigoplus_{\zg \in\zG }M^{\zg}$ such that the $A$-module structure respects the grading, i.e.
$$A^{\za} M^{\zb}\subset M^{\za+\zb}\;, \quad \forall\za,\zb\in\zG\;.$$

The notion of \emph{graded right module over $A$} is defined analogously.
Thanks to the graded-commutativity of $A$,
a left $A$-module structure induces a compatible right $A$-module structure
given by
\be \label{LRmod}m\, a := (-1)^{\la\degr{a},\degr{m}\ra}a\, m\,,\ee
and vice-versa.
Hence, we identify the two concepts.

\medskip
An $A$-module is called \emph{free of total rank} $r\in \N$ if it admits a basis of r homogeneous elements $\{ e_s\}_{s=1, \ldots , r}$. In this case, every element $m\in M$ can be
decomposed in a basis either with left or right coefficients, which are clearly related through
(\ref{LRmod}).

\medskip
A \emph{morphism} of $A$-modules, is a map $\ell :  M \to N$ which is $A$-linear of degree zero
(i.e. $\ell(M^{\za}) \subset N^{\za}$, $\forall \za \in \zG$).
We will denote the set of such maps by $\Hom_A(M,N)$. We usually refer to this set as the categorical $\Hom$ since
$A$-modules with these degree-preserving $A$-linear maps
form a category {\tt Gr}$_\zG${\tt Mod}$_A$.

We remark that the $\Hom$ set of graded $A$-modules is not a graded $A$-module itself. The internal $\sss{Hom}$ of the category {\tt Gr}$_\zG${\tt Mod}$_A$ is
$$\iHom_A(M,N):=\bigoplus_{\zg\in \zG}\iHom^{\zg}_A(M,N)\;,$$
where each $\iHom^{\zg}_A(M,N)$ consists of $A$-linear maps $\ell:M\to N$ of degree $\zg$, that is additive maps satisfying
\be\ell(am)=
(-1)^{\la\zg,\,\degr{a}\ra}a\,\ell(m)
\quad
\big(\,\text{or equivalently, }
\ell(ma)=\ell(m)a \, \big) \label{right}\ee
$$ \mbox{ and } \qquad \ell(M^{\za})\subset{}N^{\za+\zg}\,.$$
The $A$-module structure of $\iHom_A(M,N)$ is given by
\be \label{iHomAmod} (a \cdot  \ell) (-):= (-1)^{\la\degr{\ell},\, \degr{a}\ra}\ell(a\cdot -)
\quad \big(\, \mbox{or equivalently, } (\ell \cdot a )(-)= \ell (a\cdot -) \,\big).
 \ee

\begin{rema}
{\rm
The categorical $\Hom$ coincides with the $0$-degree part of the internal $\iHom$, i.e.
$\Hom_A(M,N)=\iHom^0_A(M,N)$.
By abuse of notation, we also refer to the elements of the internal $\iHom$ as morphisms. To make clear the distinction between categorical and internal hom, we often add the adjective ``graded'' in the latter case.
}
\end{rema}

We define graded endomorphisms and graded automorphisms of an $A$-module $M$ by
\beas \iEnd_A(M):=\iHom_A(M,M)& \mbox{ and } & \iAut_A(M):=\{ \ell \in \iEnd_A(M) \; :\; \ell \mbox{ invertible } \} \;,\eeas
and their degree-preserving analogues by
\beas \End_A(M):=\iEnd^0_A(M)& \mbox{ and } & \Aut_A(M):=\iAut^0_A(M)\;.\eeas
In situations where it is not misleading, we will drop the subscript and just write $\iHom(M,N)$, $\iEnd(M)$, $\iAut(M)$, etc.
\medskip

The \emph{dual} of a graded $A$-module $M$ is the graded $A$-module $M^*:=\iHom(M,A)$. As for classical modules, if $M$ is free with basis $\{e_i\}_{i=1,\ldots ,r}$ then its dual module $M^*$ is also free of same rank. Its basis $\{\ze ^i \}_{i=1,\ldots ,r}$ is defined as usual by
$$ \ze^i(e_j)=\zd^i_j\;, \quad \forall \, i,j $$
where $\zd^i_j$ is the Kronecker delta.
Note that this implies $\degr{\ze^i}=\degr{e_i}$ for all $i$.

\subsection{Lie algebras, Derivations}

A \emph{$\zG$-colored Lie algebra} $A$ is a $\zG$-graded algebra in which the multiplication operation (denoted $[\;\cdot\, , \cdot\;]$) verify the following two conditions, for all homogeneous elements $a,b,c\in A$.
 \begin{enumerate}
 \item Graded skew-symmetry:  $$ [a,b]=-(-1)^{\la \degr{a},\degr{b}\ra} [b,a] \; ;$$
 \item Graded Jacoby identity:  $$ [a,[b,c]]=[[a,b],c]+(-1)^{\la \degr{a},\degr{b} \ra }[b,[a,c]]\;.$$
 \end{enumerate}

Natural examples of colored Lie algebras are $\zG$-graded associative algebras
with the graded commutator
\be \label{GrCommutator} [a,b]= ab-(-1)^{\la \degr{a},\degr{b}\ra} ba\;. \ee
If $A$ is a $(\Z_2)^n$-colored Lie algebra, its $0$-degree part $A^0$ is a classical Lie algebra.
\medskip

An homogeneous \emph{derivation} of degree $\zg$ of a $\zG$-graded algebra $A$ over a field $\K$, is a $\K$-linear map of degree $\zg$ $D \in \iEnd_{\K}^{\zg}(A)$
which verifies the \emph{graded Leibniz rule}
\beas D(ab)=D(a)\,b+(-1)^{\la \degr{a}, \zg \ra}a \, D(b) \eeas
for all homogeneous elements $a,b \in A$.

We denote the set of derivations of degree $\zg$ of $A$ by $\op{Der}^{\zg}(A)$. Then, the set of all graded derivations of $A$
$$\op{Der}(A):= \bigoplus_{\zg \in \zG}\op{Der}^{\zg}(A)$$ is a $\zG$-graded vector space. It is also an $A$-module, with the $A$-module structure given by $(aD)(x)=a\,D(x)$.
Moreover, considering composition of derivations, $\op{Der}(A)$
is also a colored Lie algebra for the commutator (\ref{GrCommutator}).

\subsection{Graded Tensor and Symmetric Algebras}
Let $A$ be a $\zG\!$-commutative algebra.

The \emph{tensor product} of two graded $A$-modules $M$ and $N$ can be defined
as follows. Let us forget, for the moment, the graded structure of $A$, seeing it simply as a non-commutative ring, and consider $M$ and $N$ respectively as right and left modules.  In this situation, the notion of tensor product is well-known (see \cite{K}), and the obtained object $M\ots_{A}N$ is a $\Z$-module.
Then, reconsidering the graded structure of the initial objects, we see that $M\ots_{A}N$ admits an induced
$\zG$-graded structure
 \be\label{degrots} M\ots_{A}N = \bigoplus_{\zg \in \zG} (M\ots_A N)^{\zg}= \bigoplus_{\zg \in \zG} \bigoplus_{\za + \zb =\zg} \left\{ \sum m \ots_A n \; \left| \;  m\in M^{\za}, n \in N^{\zb}\right. \right\}\;.\ee
Moreover, because of the actual two-sided module structure of both $M$ and $N$, the resulting object $M\ots_{A}N$ have also right and left $A$-module structures, which are by construction compatible (in the sense of (\ref{LRmod})).

As usual, tensor product of graded $A$-modules can be characterized as a universal object. 
All classical results and constructions related to the tensor product can then be transferred to the graded case without major difficulties.\medskip

Set $M^{\ots k}=M \ots_A \ldots \ots_A M$ ($n$ factors $M$, $n\geq 1$) and $M^{\ots 0}:= A$. The graded $A$-module
$$ T^{\bullet}_A M := \bigoplus\nolimits_{k\in \N} M^{\ots k} $$
is an associative graded $A$-algebra, called {\it graded tensor algebra}, with multiplication
 $$ \ots_A : M^{\ots r} \ts M^{\ots s} \to M^{\ots r} \ots_A M^{\ots s}\simeq M^{\ots (r+s)}\;.$$
The graded $A$-algebra $T^{\bullet}_A M$ is in fact bi-graded: it has the classical $\N$-grading (given by the number of factors in $M$) that we call
\emph{weight}, and an induced $\zG$-grading (see (\ref{degrots})) called \emph{degree}.

Taking the quotient of $T^{\bullet}_A M$ by the ideal $J_{S}$ generated by the elements of the form
$$ m\ots m' -(-1)^{\la \degr{m}, \degr{m'}\ra} m' \ots m , \quad \forall m,m' \in M $$
we obtain a  $\zG\!$-commutative $A$-algebra
$  S_A^{\bullet}(M) $
called the \emph{graded symmetric algebra}.

As their classical analogues, both these notions satisfy universal properties.
\subsection{Change of Parity Functors}
		
Unlike the classical super case (i.e. $\zG=\Z_2$), in general we have many different {\it parity reversion functors}.

For every $\zp \in \zG\odp$, we specify an endofunctor of the category of modules over a $\zG\!$-commutative algebra $A$
	\bemaps
 \zP: & \mbox{{\tt Gr}$_\zG${\tt Mod}$_A$} & \longrightarrow &\mbox{{\tt Gr}$_\zG${\tt Mod}$_A$} \\
               &   M &  \mapsto & \zP M   \\
               & \Hom_A(M,N) \ni f & \mapsto & f^{\zP} \in\Hom_A(\zP M, \zP N)
    \eemaps
The object $\zP M$ is defined by
\beas (\zP M)^\za := M^{\za+\zp}\;, \quad \forall\za \in \zG\;,\eeas
and graded $A$-module structure 
\beas \zP(m+m'):=\zP m+\zP m'& \mbox{ and } &  \zP(am):=(-1)^{\la \zp ,\, \degr{a} \ra} \, a\;\zP m \;.  \eeas
The morphism $f^{\zP}\in \Hom_A(\zP M,\zP N)$ is defined by $$ f^{\zP}\,(\zP m):=\zP\lp f(m)\rp \;.$$

Clearly, the map $\zP$ which sends an $A$-module $M$ to the $A$-module $\zP M$ is an $A$-linear map of degree $\zp$, i.e. $\zP \in \iHom^{\zp}(M,\zP M)$.

\section{Graded Matrix Calculus}\label{Sec2}


A graded morphism $t :M \to N$ of free $A$-modules of total rank $r$ and $s$ respectively, can be represented by a matrix over $A$.
Fixing a basis $\{e_i\}_{i=1,\ldots ,r}$ of $M$ and  $\{h_j\}_{j=1,\ldots ,s}$ of $N$
and considering elements of the modules as column vectors of right coordinates
\beas M\ni m= \sum e_j \, a^j &\quad  \simeq \quad & \vect{m}=\lp\begin{array}{c} a^1 \\ a^2 \\ \vdots \end{array}\rp\,, \eeas
the graded morphism is defined by the images of the basis vectors
$$ t(m)= \sum_j t(e_i)\,  a^i = \sum_{i,j} h_j\,  t^j_{\;\, i} \, a^i  $$
Hence, applying $t$ corresponds to left multiplication by the matrix $T=(t^j_{\;\, i}) \in \sss{M}(s\ts r ; A )$, 
$$ t(m)\simeq T \vect{m}.$$
We have a similar description when considering elements of the modules
 as row vectors of left coordinates.
In this paper, we choose the first approach.
This choice is justified by the fact that the graded morphisms are easier to handle  when one consider the right module structure, see (\ref{right}).

In what follows, the graded modules are implicitly assumed to be free, except when explicitly stated.

\subsection{Case $\zG=(\Z_2)^n$ }

$(\Z_2)^n$ is a additive group of finite order $N:=2^n$ and we can enumerate its elements following the \emph{standard order}: the first $q:=2^{n-1}$ elements being the even degrees ordered by lexicographical order, and the last ones being the remaining odd degrees, also ordered lexicographically.
E.g. $(\Z_2)^2=\{ (0,0), (1,1), (0,1),(1,0) \!\}$.
In the following, we denote $\zg_{i}$ the $i$-th element of $(\Z_2)^n$ with respect to this standard order.

This allows to re-order the basis of the considered graded $A$-modules following the degrees of the elements. We call a basis ordered in this way a \emph{standard basis}. From now on, we only consider this type of basis.

The \emph{rank} of a free graded module $M$ over a $(\Z_2)^n$-commutative algebra is then a $N$-tuple $\vect{r}=(r_1, \ldots , r_N) \in \N^N$ where each $r_i$ is the number of basis elements of degree $\zg_i$.
Hence, a standard basis $\{e_j\}_{j=1, \ldots ,r}$ of a graded $A$-module of rank $\vect{r}$ is such that the first $r_1$ elements are of degree $\zg_1$, the following $r_2$ elements are of degree $\zg_2$, etc.

Consequently, the matrix corresponding to an homogeneous graded morphism $t :M \to N$ of free $A$-modules of ranks $\vect{r}$ and $\vect{s}$ respectively, writes as a block matrix
\begin{equation}
\label{MatT}
T=\left(
\begin{array}{c|c|c}
T_{11} \Top &\; \ldots \; &T_{1N}\\[6pt]
\hline
\ldots \Top &\ldots&\ldots\\[3pt]
\hline
&&\\[-4pt]
T_{N1} &\ldots&T_{NN} \\[3pt]
\end{array}
\right)\,,
\end{equation}
where each block $T_{uv}$ (of dimension $s_u \ts r_v$) have homogeneous entries of the same degree. This latter is given by $\zg_u+\zg_v+ \degr{t} $, i.e. it depends on both the position $(u,v)$ of the block 
and the degree of the matrix, which is by definition the degree of the corresponding graded morphism $t$.
We will denote by
$\sss{M}(\vect{s},\vect{r};A)=\bigoplus_{\zg \in(\Z_2)^n}\sss{M}^{\zg}(\vect{s},\vect{r};A) $
the space of such matrices, also called \emph{graded matrices}.

\begin{exa}
{\rm
As a particular case of Clifford algebras, the algebra of quaternions $\qH$ is a $(\Z_2)^3$-commutative algebra (\cite{MO}).
We assign to the generators $1$, $\qi$, $\qj$ and $\qk$ of $\qH$ a degree following the standard order of $(\Z_2)^3$, i.e.
 $$
 \begin{array}{ccccc}
     \degr{1} := (0,0,0)\;, & \degr{\qi}:=(0,1,1)\;,& \degr{\qj}:=(1,0,1) & \mbox{ and } & \degr{\qk}:=(1,1,0)\;.
 \end{array}
 $$
Note that, as every other Clifford algebra (with the gradation given as in Example \ref{Clif}), it is purely even i.e. graded only by the even subgroup of $(\Z_2)^n$.

Every graded endomorphism of a free $\qH$-module $M$ of rank $\vect{r}$ then corresponds to a matrix in $\sss{M}\lp\vect{r};\qH \rp$, that is
a quaternionic matrix.
}
\end{exa}

 The $1$-to-$1$ correspondence
\beas \sss{M}^{\zg}(\vect{s},\vect{r};A) \simeq \iHom^{\zg}(M,N) \;,& \forall \zg \in(\Z_2)^n\;,\eeas
permits to transfer the associative graded $A$-algebra structure of $\iHom(M,N)$ to $\sss{M}(\vect{s},\vect{r};A)$.
Thus, we get the usual sum and multiplication of matrices, as well as an unusual multiplication of matrices by scalars in $A$ defined as follows.
\beas
aT=\left(
\begin{array}{c|c|c}
(-1)^{\la \degr{a}, \zg_1 \ra}aT_{11} \Top &\; \ldots \; & (-1)^{\la \degr{a}, \zg_1 \ra}aT_{1N}\\[6pt]
\hline
\ldots \Top &\ldots&\ldots\\[3pt]
\hline
&&\\[-4pt]
(-1)^{\la \degr{a}, \zg_N \ra}aT_{N1} &\ldots&(-1)^{\la \degr{a}, \zg_N \ra}aT_{NN} \\[3pt]
\end{array}
\right)\,,
\eeas
i.e. the $(i,j)$-th entry lying in the $(u,v)$-block is given by
$$ (aT)^i_{\;j} = (-1)^{\la \degr{a}, \zg_u \ra}a \, t^i_{\;j} \,.$$
Note that the sign appearing here is a direct consequence of (\ref{iHomAmod}).

In particular, graded endomorphisms $\iEnd(M)$ of a free graded $A$-module $M$ (of finite rank $\vect{r}$) can be seen as square graded matrices $\sss{M}(\vect{r};A):=\sss{M}(\vect{r},\vect{r};A)$. With the commutator (\ref{GrCommutator})
it is another example of $(\Z_2)^n$-colored Lie algebra.
\medskip

\subsection{Graded Transpose}\label{GradedTranspose}

As in the previous section, let $M$ and $N$ be
two graded $A$-modules of ranks $\vect{r}$ and $\vect{s}$ respectively.

The graded transpose $\gt T$ of a matrix $T \in \sss{M}(\vect{s},\vect{r};A) $,
that corresponds to $t \in \iHom(M,N)$, is defined as the matrix corresponding to the transpose $t^*\in \iHom(N^*, M^*)$ of $t$. For simplicity, we suppose $t$ to be homogeneous of degree $\degr{t}\;$.

We recall that the dual graded $A$-module of $M$ is $M^*:=\iHom(M,A)$, so that the dual morphism $t^*$  is naturally defined, for all $n^* \in N^*$ and all $m\in M$, by
\be \label{gt} \lp  t^*(n^*), m\rp = (-1)^{\la \degr{t},\degr{n^*} \ra} (n^*, t(m)) \ee
where $(-,-)$ denotes the evaluation of the involved morphisms on the corresponding source-module element.

Let $\{ e_k\}_{k=1,\ldots ,r}$ (resp. $\{h_l\}_{l=1,\ldots ,s}$) be the basis of 
$M$ (resp. $N$) and let $\{\ze^k \}_{k=1,\ldots ,r}$ (resp. $\{\zh^l\}_{l=1,\ldots ,s}$) the corresponding dual basis.

\begin{prop}
The \emph{graded transpose} $\gt T$ of a matrix $T=(T_{uv}) \in \sss{M}^{\degr{t}}(\vect{s},\vect{r};A)$ (considering here its block form (\ref{MatT})) is given by
$$ \lp\gt T\rp_{vu}= (-1)^{\la \zg_u+\zg_v, \degr{t} + \zg_v \ra} \transp T_{uv} .$$
where $\transp \;$ is the classical transpose.
\end{prop}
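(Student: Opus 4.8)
The plan is to compute the matrix of $t^*$ in the dual bases directly from the defining relation (\ref{gt}), tracking the signs produced by the right $A$-module structure of the internal $\iHom$. With our conventions a morphism is encoded by the images of the source basis written with the target basis on the left, so $\gt T$ is by definition the matrix satisfying $t^*(\zh^l)=\sum_k \ze^k\,(\gt T)^k_{\;\,l}$. The strategy is to evaluate both sides of (\ref{gt}) on the pair $(\zh^l,e_i)$ and read off the entry $(\gt T)^i_{\;\,l}$, recalling that $\degr{\ze^k}=\degr{e_k}$ and $\degr{\zh^l}=\degr{h_l}$.

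For the left-hand side I would use the second form of (\ref{iHomAmod}), namely $(\ze^k\!\cdot b)(m)=\ze^k(bm)$, followed by (\ref{right}) applied to the degree-$\degr{e_k}$ map $\ze^k$, to obtain
\begin{equation*}
\big(t^*(\zh^l)\big)(e_i)=\sum_k \ze^k\big((\gt T)^k_{\;\,l}\,e_i\big)=\sum_k (-1)^{\la \degr{e_k},\,\degr{(\gt T)^k_{\;\,l}}\ra}(\gt T)^k_{\;\,l}\,\ze^k(e_i)=(-1)^{\la \degr{e_i},\,\degr{(\gt T)^i_{\;\,l}}\ra}(\gt T)^i_{\;\,l},
\end{equation*}
the last equality using $\ze^k(e_i)=\zd^k_i$. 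For the right-hand side, writing $t(e_i)=\sum_j h_j\,t^j_{\;\,i}$ and using the sign-free rule $\zh^l(h_j\,a)=\zh^l(h_j)\,a$ from (\ref{right}) gives simply $(\zh^l,t(e_i))=t^l_{\;\,i}$, so the whole right-hand side of (\ref{gt}) equals $(-1)^{\la \degr{t},\,\degr{h_l}\ra}\,t^l_{\;\,i}$.

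Equating the two sides and solving yields
\begin{equation*}
(\gt T)^i_{\;\,l}=(-1)^{\la \degr{e_i},\,\degr{(\gt T)^i_{\;\,l}}\ra+\la \degr{t},\,\degr{h_l}\ra}\,t^l_{\;\,i}.
\end{equation*}
Here I would substitute the known degree of the entry, which by the block-degree rule recalled after (\ref{MatT}) is $\degr{(\gt T)^i_{\;\,l}}=\degr{e_i}+\degr{h_l}+\degr{t}$. Expanding the exponent by bi-additivity and using the symmetry of $\la\cdot,\cdot\ra$, it remains to check that
\begin{equation*}
\la \degr{e_i},\,\degr{e_i}\ra+\la \degr{e_i},\,\degr{h_l}\ra+\la \degr{e_i},\,\degr{t}\ra+\la \degr{h_l},\,\degr{t}\ra
\end{equation*}
coincides with the claimed exponent $\la \zg_u+\zg_v,\,\degr{t}+\zg_v\ra$ under the identification $\zg_v=\degr{e_i}$ and $\zg_u=\degr{h_l}$ (the row and column block indices of $\gt T$). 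Since the classical transpose satisfies $(\transp T_{uv})^i_{\;\,l}=t^l_{\;\,i}$, this reassembles into the block identity $(\gt T)_{vu}=(-1)^{\la \zg_u+\zg_v,\,\degr{t}+\zg_v\ra}\transp T_{uv}$, and the general non-homogeneous case follows by linearity.

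I expect the only delicate point to be bookkeeping the two asymmetric sources of signs: combining the right-module structure of $M^*$ with (\ref{right}) produces the factor $(-1)^{\la \degr{e_i},\,\degr{(\gt T)^i_{\;\,l}}\ra}$, in which the entry's own degree intervenes, whereas the pairing on $N$ is sign-free. Keeping these straight and inserting the correct entry degree is the crux; once the exponent is expanded, matching it with the statement is a routine application of symmetry and bi-additivity of the scalar product.
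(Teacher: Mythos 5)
Your proposal is correct and follows essentially the same route as the paper's own proof: evaluate both sides of (\ref{gt}) on the pairs $(\zh^l,e_i)$, use (\ref{iHomAmod}) and (\ref{right}) to pull the right coefficients of $\gt T$ past the dual basis vectors (producing the sign $(-1)^{\la \degr{e_i},\,\degr{(\gt T)^i_{\;l}}\ra}$), note that the pairing on $N$ is sign-free, and substitute the entry degree $\degr{t}+\degr{e_i}+\degr{h_l}$ to match the claimed exponent. The only cosmetic difference is that the paper reads this entry degree off from the derived identity itself (thereby concluding that $t^*$ is homogeneous of degree $\degr{t}$), whereas you invoke the block-degree rule directly; both justifications are immediate and equivalent.
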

\begin{proof}
From the definition (\ref{gt}) of $t^*$, we have that
\beas
\lp  t^*(\zh^j), e_i\rp = (-1)^{\la \degr{t},\degr{h_j} \ra} (\zh^j, t(e_i))
  											 = (-1)^{\la \degr{t},\degr{h_j} \ra}  \sum_k  (\zh^j, h_k )t^k_{\;i}
 												 = (-1)^{\la \degr{t},\degr{h_j}\ra} t^j_{\;i}\;.
\eeas
On the other hand, denoting $t^{*\,j}_i$ the $(i,j)$-entry of the matrix $\gt T$, 
we have
\beas
\lp  t^*(\zh^j), e_i\rp = \sum_k (\ze^k t^{*\,j}_k, e_i)
												 = \sum_k (-1)^{\la \degr{t^{*\,j}_k}, \degr{e}_i\ra}(\ze^k , e_i)t^{*\,j}_k
												 = (-1)^{\la \degr{t^{*\,j}_i}, \degr{e}_i\ra} t^{*\,j}_i\;.
\eeas
Thus, for all $i,j$,
\be \label{transpSign} t^{*\,j}_i = (-1)^{\la \degr{t^{*\,j}_i}, \degr{e}_i\ra+\la \degr{t},\degr{h_j}\ra } t^j_{\;i}\ee
In particular, $\degr{ t^{*\,j}_i}= \degr{t^j_{\;i}}=\degr{t}+\degr{e_i}+\degr{h_j}$ for all $i,j$, so that the transpose morphism $t^*$ is also  homogeneous of same degree $\degr{t}$. Hence, (\ref{transpSign}) rewrites as
$$ t^{*\,j}_i = (-1)^{\la  \degr{t}+\degr{e}_i,\degr{h_j}+\degr{e}_i\ra } t^j_{\;i}  $$
and the result follows.
\end{proof}

In the super case (i.e. $n=1$), the graded transpose coincide with the well-known super transpose.

\medskip
It is easily verified by straightforward computations, that the operation of graded-transposition satisfy the following familiar property.
\begin{cor}\label{lemma}
For any pair of homogeneous square graded matrices $S,T \in \sss{M}(\vect{r};A)$, of degrees $s$ and $t$ respectively, we have that
$$ \gt\lp ST\rp=(-1)^{\la s,t \ra}\gt T \,\gt S \;.$$
Consequently, we also have that
\bea\label{lem} [\gt S, \gt T ]=-\gt\, [ S,T] \;.\eea
\end{cor}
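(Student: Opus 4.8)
The plan is to deduce the multiplicativity of $\gt(\cdot)$ from the duality characterization of the graded transpose, rather than from a brute-force block computation. Recall that $\gt T$ is by definition the matrix of the dual morphism $t^*$, and that under the correspondence $\sss{M}(\vect{r};A)\simeq\iEnd(M)$ the matrix product $ST$ corresponds to the composition $s\circ t$ (apply first $t$, then $s$, as is forced by the column-vector/left-multiplication convention fixed in Section~\ref{Sec2}). Hence it suffices to establish the morphism-level identity $(s\circ t)^*=(-1)^{\la s,t\ra}\,t^*\circ s^*$ and then read it back off on matrices, noting that $t^*\circ s^*$ corresponds to the product $\gt T\,\gt S$.

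To obtain this identity I would evaluate both sides on arbitrary $p^*\in M^*$ and $m\in M$. Since $\degr{s\circ t}=s+t$, the defining relation (\ref{gt}) gives
$$\lp (s\circ t)^*(p^*),m\rp=(-1)^{\la s+t,\degr{p^*}\ra}\lp p^*, s(t(m))\rp.$$
On the other hand, applying (\ref{gt}) twice and using that the dual morphism preserves degree (so $\degr{s^*(p^*)}=s+\degr{p^*}$, as established in the proof of the preceding Proposition), I would compute
$$\lp (t^*\circ s^*)(p^*),m\rp=(-1)^{\la t,\,s+\degr{p^*}\ra}(-1)^{\la s,\degr{p^*}\ra}\lp p^*, s(t(m))\rp.$$
Comparing exponents, the two differ by exactly $\la t,s\ra=\la s,t\ra$, which yields $(s\circ t)^*=(-1)^{\la s,t\ra}\,t^*\circ s^*$ and hence the first displayed formula of the statement.

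For the commutator identity I would expand the graded commutator of $\gt S$ and $\gt T$, using that graded transposition preserves degree, so $\degr{\gt S}=s$ and $\degr{\gt T}=t$. Substituting $\gt S\,\gt T=(-1)^{\la s,t\ra}\gt(TS)$ and $\gt T\,\gt S=(-1)^{\la s,t\ra}\gt(ST)$ (both instances of the first part, invoking the symmetry of $\la\,,\,\ra$ and the fact that every sign is $\pm1$, so equal to its own inverse), I get
$$[\gt S,\gt T]=\gt S\,\gt T-(-1)^{\la s,t\ra}\gt T\,\gt S=(-1)^{\la s,t\ra}\gt(TS)-\gt(ST),$$
while linearity of $\gt(\cdot)$ gives $\gt[S,T]=\gt(ST)-(-1)^{\la s,t\ra}\gt(TS)$. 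These are negatives of one another, which is precisely (\ref{lem}).

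The main obstacle is sign bookkeeping: keeping straight the degree of a dualized element, the order in which matrix multiplication corresponds to composition, and the fact that $\pm1$ signs are involutive so that $\la s,t\ra$ and $\la t,s\ra$ may be interchanged freely. Once these points are pinned down the computation is routine. As an alternative one can verify the first formula directly from the block expression in the preceding Proposition, but that route is considerably more tedious.
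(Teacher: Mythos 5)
Your proof is correct, but it follows a genuinely different route from the paper's. The paper dismisses the corollary with ``easily verified by straightforward computations'', meaning a direct entry-wise check: one substitutes the explicit sign formula $t^{*\,j}_{i}=(-1)^{\la \degr{t}+\degr{e}_i,\,\degr{e}_j+\degr{e}_i\ra}\,t^{j}_{\;i}$ from the preceding Proposition into the $(j,i)$-entry of each side and collects the signs over the summation index (this is exactly how a proof left in the source, but commented out, handles the commutator identity). You instead work at the level of dual morphisms: applying the defining relation (\ref{gt}) twice, and using that $s^*$ is again homogeneous of degree $s$, you get $(s\circ t)^*=(-1)^{\la s,t\ra}\,t^*\circ s^*$, and the matrix identity follows because the correspondence between graded matrices and graded morphisms turns composition into matrix product --- you correctly invoke the paper's column-vector, right-coordinate convention, under which $s\circ t\leftrightarrow ST$ and $t^*\circ s^*\leftrightarrow \gt T\,\gt S$. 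Your deduction of (\ref{lem}) from the first formula, linearity of $\gt{}$ on matrices of a fixed degree, and involutivity of the signs is likewise sound, and is presumably what the paper's ``Consequently'' intends. What your route buys: the sign $(-1)^{\la s,t\ra}$ has a conceptual origin (the cost of iterating (\ref{gt})) rather than emerging from block-index bookkeeping, and the argument applies verbatim to composable morphisms between different free modules, i.e.\ to rectangular graded matrices. What the entry-wise route buys: it stays entirely at the matrix level and does not rely on the fact --- implicit in your argument and worth stating --- that a morphism $M^*\to M^*$ is determined by its evaluations $\lp f(p^*),m\rp$ for all $p^*$ and $m$, which holds here because $M$ is free of finite rank.
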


\medskip

\subsection{Graded Trace}
As in the classical context, for any graded $A$-module $M$ there is a natural isomorphism of $A$-modules
\be \label{clsqiso} M^*\!\ots_A M \simeq \iEnd(M)\,.\ee
It is given by reading a tensor $\za \ots m \in M^*\!\ots_A M$ as the endomorphism
\beas
\za \ots m\; :& M \ni m' \; \mapsto \; (-1)^{\la \degr{m},\degr{m'} \ra} \za(m')m\; \in  M
\eeas

In the case where $M$ is free (of rank $\vect{r}$), this endomorphism is represented by a matrix $T=(t^i_{\;j}) \in \sss{M}(\vect{r};A)$, where the $(i,j)$-th entry is
$$ t^i_{\;j}=(-1)^{\la \degr{m},e_j\ra+\la \degr{\za_j},\degr{e}_j+\degr{e}_i \ra}\za_j m^i $$

The above isomorphism permits to define the \emph{graded trace} of the matrix corresponding to the endomorphism $\za \ots m$ as its contraction $\za(m)$ (as a $(1,1)$-tensor).

\begin{defi}
{\rm
The \emph{graded trace} of an homogeneous matrix $T=(T_{uv}) \in \sss{M}^{\degr{t}}(\vect{r};A)$ (considering here its block form (\ref{MatT})) is defined as
\be\label{trace} \gtr(T) := \sum_u (-1)^{\la \zg_u+\degr{t},\zg_u\ra} \tr(T_{uu}) \ee
where $\tr$ denotes the classic trace of a matrix.
}
\end{defi}

It is proved in \cite{COP} that
\textit{
$\op{\zG tr}:\sss{M}(\mathbf{r}; A)\to A$
is the unique (up to multiplication by a scalar of degree $0$)
homomorphism of $A$-linear colored Lie algebras.}
\medskip

                \subsection{Graded Berezinian}\label{GBER}
Fixing a standard basis $\{e_i\}_{i=1, \ldots ,r}$ of a free graded $A$-module $M$ permits to represent degree-preserving automorphisms of this module as invertible $0$-degree matrices, the group of which we denote by $\sss{GL}^0(\vect{r};A)$.
In \cite{COP} we have introduced the notion of \emph{Graded Berezinian} for this type of  matrices. Let us recall the main result of \cite{COP}.

\textit{
There is a unique group homomorphism
$$ \gber: \sss{GL}^{0}(\mathbf{r}; A) \to (A^{0})^{\times} $$
such that:
\begin{enumerate}
        \item For every block-diagonal matrix $X \in\sss{GL}^{0}(\mathbf{r}; A)$,
        $$ \gber(X)=\prod_{u=1}^{q} \det(X_{uu})\, \cdot \!\!\! \prod_{u=q+1}^{N} \detinv(X_{uu})\,.$$
    \item The image of any lower $($resp., upper$)$  block-unitriangular matrix in $\sss{GL}^{0}(\mathbf{r}; A)$ equals $1\in(A^0)^{\times}$.
        \end{enumerate}
}\medskip
Here, $ (A^{0})^{\times}$ denotes the invertible elements of the $0$-degree part of $A$, and $q:= N/2=2^{n-1}$.\medskip

Note that, similarly to the classical Berezinian, $\gber$ is defined only for $0$-degree
invertible matrices.
In the particular case where the module $M$ is graded by the even part $(\Z_2)^n\evp$,
the function $\gber$ is a polynomial.
Moreover, if $A=\qH$ then $\gber$ coincides with the classical Diedonn\'e determinant (restricted to these type of quaternionic matrices), see \cite{COP}.

\section{Cohomological Definition of the Graded Berezinian} \label{Sec3}

In this section, we define the Graded Berezinian module and describe its
cohomological interpretation. We obtain the function $\gber$ from the action of the group of degree-preserving automorphisms.
This construction generalizes the one described in \cite{Man88} for the classical Berezinian.

\subsection{Graded Berezinian Module}
Given a free $A$-module $M$, the corresponding  {\it graded Berezinian module} $\gber(M)$ is the free $A$-module of total rank $1$ built up from formal basis elements $\op{B}(\{e_i\})$ for each standard basis $\{e_i\}_{i=1,\ldots ,r}$ of $M$.
The transformation law induced by a change of basis $e'_i=e_j \, t^{j}_{\; i}$ in $M$ (of transition matrix $T=(t^j_{\;i})$) is given by
\be \label{trel1}\op{B}(\{e'_i\})=\op{B}(\{e_i\})\gber(T).\ee

Hence, intuitively, $\gber(M)$ is the free $A$-module of total rank $1$ which is functorial with respect to $\vect{0}$-degree automorphisms of $A$-modules and if $M$ is concentrated in only one even degree (i.e. is just a classical module) it coincides with the classical determinant module
$\op{Det}(M):=\bigwedge^{\op{top}}M$.

The above description of the module $\gber(M)$ is quite abstract.
In the text section, we will present an explicit cohomological construction of this module.

\subsection{Cohomological Construction}

Consider the graded-commutative algebra $S_A^{\bullet}(\zP M \opl  M^*)$, where $M$ is a (free) graded $A$-module of rank $\vect{r}$.

$S_A^{\bullet}(\zP M \opl  M^*)$ is the $(\Z _2)^n$-graded-commutative algebra of
polynomials in the graded variables $\zP e_i$ and $\ze^i$ with coefficients in $A$.
We define the operator $d$ to be left multiplication by the following element of $S_A^{2}(\zP M \opl  M^*)$:
\begin{equation}
\label{dEq}
d=\sum_i \zP e_i\, \ze^i,
\end{equation}
that we also denote by $d$, by abuse of notation.
This choice of $d$ is natural since $d|_M\equiv \zP$.

This defines a cochain complex $\cK ^{\bullet}:=\big( S_A^{\bullet}(\zP M \opl  ~M^*),\,d \,\big)$, thanks to the following result.

\begin{prop}\label{differential}
The operator $d$ is independent of the choice of the basis and $d^2=0$.
\end{prop}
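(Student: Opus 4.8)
The statement has two parts: that $d$ is independent of the chosen basis, and that $d^2=0$. I will dispose of the second first, since it is nearly immediate.

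\textbf{The relation $d^2=0$.} The plan is to read off the $(\Z_2)^n$-degree of $d$ and invoke graded-commutativity. Since $\zP\in\iHom^{\zp}(M,\zP M)$ raises degree by $\zp$, the generator $\zP e_i$ has degree $\degr{e_i}+\zp$, while $\degr{\ze^i}=\degr{e_i}$. Hence each summand $\zP e_i\,\ze^i$ has degree $2\degr{e_i}+\zp=\zp$ in $(\Z_2)^n$, so $d$ is homogeneous of degree $\zp$. As $\zp\in\zG\odp$ we have $\la\zp,\zp\ra=1$, so $d$ is an \emph{odd} element of the graded-commutative algebra $S_A^{\bullet}(\zP M\opl M^*)$; since $\K$ has characteristic $0$, every odd element squares to zero. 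As $d^2$ is left multiplication by $d\cdot d=0$, the claim follows and $\cK^{\bullet}$ is a genuine cochain complex.

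\textbf{Basis independence.} Here the goal is to show $\sum_i \zP e'_i\,\ze'^i=\sum_i \zP e_i\,\ze^i$ for two standard bases related by a transition matrix $T=(t^j_{\;i})\in\sss{GL}^0(\vect r;A)$, that is $e'_i=\sum_j e_j\,t^j_{\;i}$. First I would record how the two families of generators transform. For $\zP e_i$ the transformation is covariant and sign-free: a short check from the definition of the parity functor and $(\ref{LRmod})$ gives $\zP(m a)=(\zP m)a$, whence $\zP e'_i=\sum_j (\zP e_j)\,t^j_{\;i}$. For the dual basis $\ze^i$ the transformation is contravariant: from $\ze'^i(e'_k)=\delta^i_k$ together with the defining relations $(\ref{gt})$ and $(\ref{right})$ one obtains $\ze'^i$ as a combination of the $\ze^l$ governed by a graded inverse of $T$, with the signs dictated by the graded transpose of Subsection~\ref{GradedTranspose}. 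Substituting both families into $\sum_i \zP e'_i\,\ze'^i$ and using graded-commutativity to move the coefficients past the generators, the factors coming from $T$ and from its graded inverse collapse, returning the sum to $\sum_i \zP e_i\,\ze^i$.

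I expect the sign bookkeeping in the contravariant transformation of $\ze^i$ to be the main obstacle, so I would prefer to give the conceptual argument instead. Under the natural isomorphism $\iHom(M,\zP M)\simeq (\zP M)\ots_A M^*$ — the degree-$\zp$ analogue of $(\ref{clsqiso})$ — the canonical morphism $\zP$ corresponds to the element $\sum_i \zP e_i\ots\ze^i$, whose image under the symmetric multiplication in $S_A^{\bullet}(\zP M\opl M^*)$ is exactly $d$. Since $\zP$ is defined without reference to a basis, so is this element, and therefore so is $d$. This makes the invariance transparent and confines all sign issues to the correspondence $\sss{M}^{\zg}(\vect r;A)\simeq\iHom^{\zg}(M,\zP M)$, which is itself basis-independent by construction.
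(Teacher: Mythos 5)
Your proof is correct, but your basis-independence argument takes a genuinely different route from the paper's. The paper computes: it records the transformation laws of the generators under a transition matrix $T\in\sss{GL}^0(\vect{r};A)$ --- $\zP e_i$ transforms by $T$ and $\ze^i$ by $\gt(T^{-1})$, formula (\ref{groupaction}) --- substitutes into $\sum_i \zP e'_i\,\ze'^i$, and verifies that all the signs cancel; this is exactly the bookkeeping you chose to sidestep. That computation is not wasted effort in the paper: (\ref{groupaction}) is reused to set up the action (\ref{groupAction}) of $\sss{GL}^0(\vect{r};A)$ on $\cK$ that underlies Proposition \ref{propGber}. Your conceptual alternative --- $d$ is the image in $S^2_A(\zP M \opl M^*)$ of the element of $(\zP M)\ots_A M^*$ corresponding, under the degree-$\zp$ analogue of the canonical isomorphism (\ref{clsqiso}), to the canonical morphism $\zP \in \iHom^{\zp}(M,\zP M)$ --- is valid and makes the invariance transparent; the paper itself gestures at it with the remark that $d|_M \equiv \zP$. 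Be aware, though, that this relocates rather than removes the sign subtleties: one still has to check that the evaluation map $(\zP M)\ots_A M^* \to \iHom(M,\zP M)$ is an isomorphism for $M$ free of finite rank, and that in \emph{every} standard basis the element $\sum_i \zP e_i \ots \ze^i$ evaluates to $\zP$ (a one-line check, $e_j \mapsto \zP e_j$, using the right $A$-linearity $\zP(ma)=(\zP m)a$); it is this basis-uniform identification that forces the element, and hence $d$, to be independent of the basis. As for $d^2=0$, your parity argument --- each summand $\zP e_i\,\ze^i$ has degree $2\degr{e}_i+\zp=\zp$, so $d$ is a homogeneous odd element, odd elements square to zero in characteristic $0$, and $d\circ d$ is left multiplication by $d\cdot d=0$ --- is a crisper packaging of what the paper dismisses as a direct computation using graded-commutativity, and buys you a proof with no index manipulation at all.
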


\begin{proof}
Let us consider a transformation matrix $T=(t^i_{\;j}) \in  \sss{GL}^0(\vect{r};A)$ from the basis $\{ e_i\}_{i=1,\ldots ,r}$ to the basis $\{e'_i\}_{i=1,\ldots ,r}$ (both standard, $\degr{e'}_i=\degr{e}_i$ $\forall i$), i.e.
$
e'_i= \sum_j e_j\, t^j_{\,\; i}\,.
$
The transformation matrix between the induced basis $\{\zP e_i\}_{i=1,\ldots ,r}$ and $\{\zP e'_i\}_{i=1,\ldots ,r}$ of $\zP M$ (resp. between the dual basis $\{\ze^i\}_{i=1,\ldots ,r}$ and $\{\ze'^i\}_{i=1,\ldots ,r}$ of $M^*$) is then $T$ (resp. $\gt (T^{-1})$), i.e.
\bea \zP e'_i= \sum_j \zP e_j t^j_{\,\; i} & \mbox{ and }& \ze'^i=\sum_k \ze^k \f{t}^{*\,i}_k= \sum_k \ze^k (-1)^{\la \degr{e}_k+\degr{e}_i\,,\, \degr{e}_i \ra}\f{t}^i_{\,\; k} \label{groupaction} \eea
where $\f{t}^i_{\,\; k}$ denotes the $(i,k)$-th entry of $T^{-1}$.

Hence, we have
\beas
\sum_{i}\zP e'_i \; \ze'^i &=& \sum_{i,j,k} \zP e_j \, t^j_{\;\,i} \; \ze^k (-1)^{\la \degr{e}_i+\degr{e}_k\,,\, \degr{e}_k \ra} \f{t}^i_{\;\, k}\\
&=& \sum_{j,k}\zP e_j \; \ze^k \lp \sum_i (-1)^{\la \degr{e}_i+\degr{e}_k\,,\, \degr{e}_k \ra+\la \degr{e}_j +\degr{e}_i\,,\,\degr{e}_k \ra} t^j_{\;\,i} \, \f{t}^i_{\;\, k}\rp \\
&=& \sum_{j,k}\zP e_j \; \ze^k (-1)^{\la \degr{e}_k\,,\,\degr{e}_k+\degr{e}_j  \ra}\lp \sum_i t^j_{\;\,i} \, \f{t}^i_{\;\, k}\rp \\
        &=& \sum_{j,k}\zP e_j \; \ze^k (-1)^{\la \degr{e}_k\,,\,\degr{e}_k+\degr{e}_j  \ra} \zd^j_k = \sum_{j}\zP e_j \; \ze^j
\eeas
so that $d$ is well-defined.

The fact that $d$ squares to zero is easily checked by direct computation, using the graded-commutativity.
\end{proof}

To lighten the notation, we will denote by $x_i$ the even elements in $\{\zP e_i\}\cup\{\ze^i\}$ and by $\zx _i$ the odd ones (up to sign). More precisely, with $r':=\sum_{i=1}^{q}r_i$ indicating the number of basis elements of $M$ of even degree, we set
\bea \label{changenotation}
x_i:= \begin{cases} \ze^i & \mbox{ if } 1\leq i \leq r' \\ \zP e_i & \mbox{ if }  r'+1 \leq i \leq r \end{cases}
& \mbox{ and } &
\zx_i:= \begin{cases} \zP e_i & \mbox{ if } 1\leq i \leq r'\\ -(-1)^{\la \degr{e}_i,\zp \ra}\ze^i & \mbox{ if }  r'+1 \leq i \leq r  \end{cases}
\eea
By construction, we still have that $\degr{\zx}_i=\degr{x}_i+\zp$ for all $i$.

With this notation, the differential $d$ corresponds to left multiplication by $\sum_i \zx_i x_i$,
and $S_{A}^{\bullet}(\zP M \opl  M^*)$ is now viewed as the $(\Z_2)^n$-commutative algebra $A[x,\zx]$ of polynomials in the $(\Z_2)^n$-graded variables $x$-s and $\zx$-s.
Let us stress the fact that the product of polynomials is here the one which is naturally induced by the graded sign rule (\ref{GCom}).

The elements of the cochain complex $\cK ^{\bullet}$ at $k$-th level ($k\geq 0$) are defined as the polynomials in $A[x,\zx]$ with $k$-th total power degree in $\zx$. The element $\zx_{1}\cdots \zx_{r}$ is a cocycle. Indeed, we have that
$$
d(\zx_{1}\cdots \zx_{r})=
\sum_i (-1)^{\left\la\degr{\xi}_i,\,\degr{x}_i+\sum_{k<i}\degr{\xi}_k\right\ra}\,
x_i \zx_{1}\cdots \zx_{i-1} \zx_i^2 \zx_{i+1} \cdots \zx_{r}
=0
$$
since $\zx_i$ are odd and hence squares to $0$.
By this same observation,  $\cK ^{k}=0 $ for all $k>r$.
Hence, this cocycle will play the analogous role of the classical ``top element''.

  \begin{prop}\label{cohom}
$$ H^k(\cK ^{\bullet})= \begin{cases}\quad\quad 0 & \mbox{ if } k\neq r \\  [\zx_1\cds\zx_r]\cdot A & \mbox{ if }k=r \end{cases}  $$
\end{prop}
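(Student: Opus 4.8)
The plan is to compute the cohomology of the Koszul-type complex $\cK^\bullet = (A[x,\xi], d)$ where $d$ is left multiplication by $\sum_i \xi_i x_i$, exploiting the fact that after the change of notation (\ref{changenotation}) the variables split cleanly into even generators $x_i$ and odd generators $\xi_i$, with $\degr{\xi}_i = \degr{x}_i + \zp$. The key structural observation is that $A[x,\xi]$ factorizes, as a bigraded object, into a tensor product over $i$ of the ``one-variable'' pieces $A[x_i,\xi_i]$, and that $d = \sum_i d_i$ where $d_i$ is multiplication by $\xi_i x_i$. Because the $\xi_i$ are odd (hence square to zero) and the $x_i$ are even polynomial generators, each factor $A[x_i,\xi_i]$ with differential $d_i$ is an elementary Koszul complex whose cohomology I can compute by hand.

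First I would set up the single-variable computation. Fix $i$ and consider the complex $K_i := A[x_i,\xi_i]$ graded by the $\xi_i$-power (which is $0$ or $1$), with $d_i(p) = \xi_i x_i \, p$. A general element is $p_0(x_i) + \xi_i p_1(x_i)$ with $p_0,p_1 \in A[x_i]$. One checks $d_i(p_0 + \xi_i p_1) = \xi_i x_i p_0$ (the $\xi_i^2$ term vanishes). Thus the degree-$0$ part maps to the degree-$1$ part by $p_0 \mapsto x_i p_0$, which is injective since $x_i$ is not a zero divisor in the polynomial ring $A[x_i]$ over $A$; hence $H^0(K_i) = 0$. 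The degree-$1$ cohomology is $\xi_i A[x_i] / \xi_i x_i A[x_i] \cong \xi_i \cdot A$ (the constants in $x_i$), since the image is exactly the polynomials divisible by $x_i$. So $H^\bullet(K_i) = \xi_i \cdot A$, concentrated in $\xi_i$-degree $1$ and represented by the class of $\xi_i$.

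Next I would assemble these via a Künneth-type argument. Since $\cK^\bullet \cong \bigotimes_{i=1}^r K_i$ as complexes (the total $\xi$-degree is the sum of the individual $\xi_i$-degrees, and $d = \sum_i d_i$ is the total differential of the tensor product of complexes, with the graded signs from (\ref{GCom})), and since each $H^\bullet(K_i)$ is a free $A$-module concentrated in a single degree, the graded Künneth formula gives $H^\bullet(\cK^\bullet) \cong \bigotimes_i H^\bullet(K_i)$. The right-hand side is concentrated in total degree $r$ (one factor of degree $1$ from each $K_i$) and is the free rank-one $A$-module generated by $\xi_1 \cdots \xi_r$. This yields exactly $H^k(\cK^\bullet) = 0$ for $k \neq r$ and $H^r(\cK^\bullet) = [\xi_1 \cdots \xi_r] \cdot A$, noting the cocycle condition was already verified in the text.

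The main obstacle is making the Künneth step rigorous in the $(\Z_2)^n$-graded-commutative setting: the tensor product of complexes carries the sign rule (\ref{GCom}), so one must check that the isomorphism $\cK^\bullet \cong \bigotimes_i K_i$ is an isomorphism of complexes with the correct Koszul signs, and that the flatness hypotheses needed for Künneth hold. Here the cohomology of each factor is $A$-free (indeed free of rank one), so no $\mathrm{Tor}$ terms arise and the Künneth map is an isomorphism; the signs are handled uniformly by the graded sign rule since all generators are homogeneous. Alternatively, to sidestep a formal Künneth statement entirely, I would argue directly by a filtration or by an explicit homotopy: define a contracting homotopy on the subcomplex where some $x_i$-degree is positive, reducing any cocycle to a scalar multiple of $\xi_1 \cdots \xi_r$ modulo coboundaries, which is the route the classical super case in \cite{Man88} follows.
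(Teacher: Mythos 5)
Your proposal is correct, but it follows a genuinely different route from the paper's. The paper proves the proposition with an explicit homotopy operator: it introduces $\zr=\sum_i \frac{\p}{\p x_i}\frac{\p}{\p \zx_i}$ and computes the graded commutator $[\zr,d]$, which on a monomial $\zx^\za x^\zb a$ of cohomological degree $k$ acts as multiplication by the integer $c=r+|\zb|-k$; since $c\neq 0$ unless $k=r$ and $\zb=0$, the operator $\zr/c$ is a contracting homotopy on all monomial types with $k\neq r$, and $H^r$ is then computed directly as the quotient of $\zx_1\cds\zx_r\, A[x]$ by $\zx_1\cds\zx_r\,(\sum_i A[x]\cdot x_i)$. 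You instead factor the complex as $\cK^\bullet\cong \bigotimes_i \lp A[x_i,\zx_i],d_i\rp$ and reduce to a one-variable computation plus a K\"unneth argument; the one-variable step is exactly as you say (left multiplication by $x_i$ is injective on $A[x_i]$, and the degree-one quotient is $\zx_i\cdot A$), and the K\"unneth step can indeed be made rigorous in the strong form you sketch: each factor splits, as a complex of graded $A$-modules, into its cohomology $\zx_i A$ plus the contractible piece $A[x_i]\xrightarrow{\;\cong\;}\zx_i x_i A[x_i]$, so the tensor product is homotopy equivalent to $\zx_1\cds\zx_r\cdot A$ concentrated in degree $r$, with no $\mathrm{Tor}$ terms to worry about. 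What your route buys: it is structural, it isolates the only sign check that matters (that the multiplication map intertwines left multiplication by $\sum_i\zx_i x_i$ with the Koszul-signed tensor differential, which works because each $\zx_ix_i$ has degree $\zp$), and, unlike the paper's division by the integer $c$, it needs no characteristic-zero hypothesis. What it costs: one must first set up tensor products of complexes and homotopy equivalences over a $(\Z_2)^n$-commutative algebra, machinery the paper never develops, whereas the paper's commutator computation, though sign-heavy, is entirely self-contained. Your fallback suggestion at the end (an explicit contracting homotopy in the style of \cite{Man88}) is precisely the method the paper actually uses.
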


\begin{proof}
Let us consider the operator $\zr=\sum_i \frac{\p}{\p {x_i}} \frac{\p}{\p {\zx_i}} $, where $ \frac{\p}{\p {x_i}}\;,\,  \frac{\p}{\p {\zx_i}}$ are graded homogeneous partial derivations, i.e.
$$
\begin{array}{rclrcl}
\frac{\p}{\p {x_i}} x_{j}&=& \zd^i_{j} \;\;,\,  \quad &\quad  \frac{\p}{\p {x_i}} \zx_{j}&=&0 \\[4pt]
\frac{\p}{\p {\zx_i}} x_{j}&=&\; 0 \;\;\;,\,  \quad & \quad \frac{\p}{\p {\zx_i}} \zx_{j}&=& \zd^i_{j} .
\end{array}
$$
for all indices $i,j$. Note that the respective degrees are
 \beas \degr{\frac{\p}{\p {x_i}}}=\degr{x}_i& \mbox{ and }& \degr{\frac{\p}{\p {\zx_i}}}=\degr{\zx}_i=\degr{x}_i+\zp\;.\eeas

Let us compute $\lpq \zr\;,\, d \rpq $ where $\lpq \;.  \;,\,  .\;\rpq $ is the $(\Z _2)^n$-commutator (\ref{GrCommutator}).

 \beas
 \lpq \zr\;,\, d \rpq  & = & \sum_{i\;,\, j}\lpq \frac{\p}{\p {x_i}} \frac{\p}{\p {\zx_i}}\,,\, \zx_jx_j\rpq \\
 					&= & \sum_{i\;,\, j}\frac{\p}{\p {x_i}}\lpq \frac{\p}{\p {\zx_i}}\,,\, \zx_jx_j\rpq +\sum_{i\;,\, j}(-1)^{\la \degr{x}_i+\zp\,,\, \zp \ra}\lpq \frac{\p}{\p {x_i}}\,\, \zx_jx_j\rpq \frac{\p}{\p {\zx_i}}\\
 					&= &  \sum_{i\;,\, j}\frac{\p}{\p {x_i}}\lpq \frac{\p}{\p {\zx_i}}\,,\, \zx_j\rpq x_j+\sum_{i\;,\, j}(-1)^{\la \degr{x}_i+\zp \,,\, \degr{x}_j+\zp \ra}\frac{\p}{\p {x_i}}\zx_j\lpq \frac{\p}{\p {\zx_i}}\;,\, x_j\rpq \\
 					&& - \sum_{i\;,\, j}(-1)^{\la \degr{x}_i\,,\, \zp \ra}\lpq \frac{\p}{\p {x_i}}\;,\, \zx_j\rpq x_j\frac{\p}{\p {\zx_i}}-\sum_{i\;,\, j}(-1)^{\la \degr{x}_i\,,\, \zp \ra+\la \degr{x}_i\,,\,  \degr{x}_j+\zp\ra}\zx_j\lpq \frac{\p}{\p {x_i}}\;,\, x_j\rpq \frac{\p}{\p {\zx_i}}
 \eeas
But, by construction,
\beas
&\lpq \frac{\p}{\p {x_i}}\;,\, x_j\rpq =\zd^i_{j} \;\;,\,  \quad & \quad  \textstyle{\lpq \frac{\p}{\p {x_i}}\;,\, \zx_j\rpq =0 \;,}\\
&\lpq \frac{\p}{\p {\zx_{i}}}\;,\,  x_{j}\rpq =\;0 \; \;\;,\,  \quad   & \quad \textstyle{\lpq \frac{\p}{\p {\zx_i}}\;,\, \zx_j\rpq =\zd^i_{j} \;,}
\eeas
so that we have
 \beas
 \lpq \zr\;,\, d \rpq 	&= &  \sum_{i}\frac{\p}{\p {x_i}}x_i-\sum_{i}(-1)^{\la\degr{x}_i\,,\, \degr{x}_i\ra}\zx_i\frac{\p}{\p {\zx_i}} \\
 & = & \sum_{i}\lp \id + (-1)^{\la \degr{x}_i\,,\, \degr{x}_i \ra} x_i \frac{\p}{\p {x_i}}\rp -\sum_{i} (-1)^{\la\degr{x}_i\,,\, \degr{x}_i\ra} \zx_i\frac{\p}{\p {\zx_i}}\\
 &=& r \id +\sum_{i}x_i \frac{\p}{\p {x_i}} -\sum_{i}\zx_i\frac{\p}{\p {\zx_i}}
 \eeas
since each $x_i$ have even degree\;,\,  i.e. $\la\degr{x}_i\;,\, \degr{x}_i\ra=0$ for all $i$.

Now,  if $P$ is a homogeneous monomial in $\cK^k$,  i.e.
$$P=\zx^\za x^\zb a_{\za\zb}$$
 with $\za \in \{0,1\}^r$ such that $\left|\za\right|:=\sum_i \za_i =k$\;,\,  $\zb\in \N^r$ and $a_{\za\zb}$ a homogeneous element of $A$\;,\,  we have for every $i$ that
 $$ x_i\frac{\p}{\p {x_i}} (P) = \begin{cases} \zb_i\,  P & \mbox{ if $\zb_i\neq 0$}\\ 0 & \mbox{ if $\zb_i= 0$} \end{cases} $$
and
$$ \zx_i\frac{\p}{\p {\zx_i}} (P) = \begin{cases} P & \mbox{ if $\za_i=1$}\\ 0 &  \mbox{ if $\za_i=0$} \end{cases}$$
so that
$$ \lpq \zr\;,\, d \rpq (P)= \lp r+ \left| \zb\right| - k \rp \, \id(P) \;.$$
In fact, we only have to consider $0\leq k \leq r$. 
Hence,  $c:=r + \left| \zb\right| - k$ is zero if and only if $k=r$ and $\zb=0$.
It follows that,  for $k\neq r$,   we have a cochain homotopy between the identity $\op{id}$ and the zero map. It is given by  $\zr/c$ on monomials of the same form of $P$. 
We  conclude that $H^k(\cK^{\bullet})=0$ for all $k\neq r$.

It remains to consider the case when $k=r$.
By definition,  $$H^r(\cK^{\bullet})= \EnsQuot{\ker(d:\cK^r\to \cK^{r+1})}{\, \im(d:\cK^{r-1}\to \cK^r )}$$
where $\cK^r=\{\zx_1\cds\zx_r \,  Q \; \textbar \; Q \in A\lpq x\rpq  \}$ and $\cK^{r+1}=0$.
Hence,  $\ker(d:\cK^r\to \cK^{r+1})=\cK^r$. On the other hand,  by direct computation (e.g. apply $d$ on an element of the form $\zx_{\za_1} \cdots \zx_{\za_{r-1}} \, Q_{\za}$ with $Q_{\za} \in A[x]$ a homogenous monomial), we obtain $\im(d:\cK^{r-1}\to \cK^r )=\zx_1\cds\zx_r \,  (\sum_i A\lpq x\rpq \cdot x_i)$.
In conclusion,
$$H^r(\cK^{\bullet})=\;  \EnsQuot{\zx_1\cds\zx_r \cdot A\lpq x\rpq }{\, \zx_1\cds\zx_r (\sum_i A\lpq x\rpq \cdot x_i)} \simeq \; \zx_1\cds\zx_r \! \cdot \!A\;.$$
\end{proof}

\begin{rema}\label{degHK}
{\rm
a)
Proposition \ref{cohom} implies that
$H(\cK^{\bullet})=H^r(\cK^{\bullet}) \simeq \lpq \xi_1 \cds \zx_r\rpq  \cdot A$,
hence is a free  $A$-module of total rank $1$.
The degree of the basis element is $$\degr{\xi_1 \cds \zx_r}= \sum_{i=1}^{q} (\zg_i  +\zp)r_i +\sum_{i=q+1}^N \zg_i r_i = r'\zp +\sum_{i=1}^N \zg_i r_i \;.$$

b)
The result is independent of the chosen parity functor $\zP$ in the cochain complex $\cK^{\bullet}$.
}
\end{rema}
\medskip

A degree-preserving automorphism of $M$, $\zf \in \sss{Aut}^0(M)$ (represented by a matrix $T\in \sss{GL}^0(\vect{r};A)$),
naturally induces two automorphisms
\beas
\zf^{\zP}: \zP M \to \zP M & \mbox{ and } & (\zf^{-1})^{*}:M^{*}\to M^{*} \;,
\eeas
and hence an automorphism (of $A$-algebras)
\beas
\zvf\; : \; S_A^{\bullet}(\zP M \opl M^*) & \to & S_A^{\bullet}(\zP M \opl M^*) \;.
\eeas
on the ``total space'' $\cK:= \bigoplus_k\cK^k= S_A^{\bullet}(\zP M\oplus M^*)$ of the corresponding complex.
Explicitly, it is given by
\bea
\zvf(x_i)=
    \begin{cases}
    (\zf^{*})^{-1}(x_{i}) & 1\leq i \leq r' \\
    \zf^{\zP}(x_{i}) & r'< i \leq r
    \end{cases}
&\mbox{ and }&
\zvf(\zx_i)=
    \begin{cases}
        \zf^{\zP}(\zx_{i}) & 1\leq i \leq r' \\
         (\zf^{*})^{-1}(\zx_{i})& r'< i \leq r
    \end{cases}
\label{groupAction}\eea
i.e. corresponds to matrix multiplication by $T$ on $\zP M$ and by  $\gt(T^{-1})$ on $M^*$.

The differential $d:\cK \to \cK$ is invariant under this transformation
 (see Proposition \ref{differential}).
Hence, we obtain an automorphism on $\EnsQuot{\ker d}{\, \im d}$. 
This latter module, is equal to
$$\EnsQuot{\ker d}{\, \im d}= \EnsQuot{\Big( {\textstyle \bigoplus_k } \ker d\vert_{\cK^k} \Big)}{\; \Big( \bigoplus_k  \im d\vert_{\cK^{k-1}} \Big)} = \bigoplus_k \lp \EnsQuot{\ker d\vert_{\cK^k}}{\,\im d\vert_{\cK^{k-1}}}\rp = \bigoplus_k H^k(\cK^{\bullet})$$
and hence is just $H^r(\cK^{\bullet})$, thanks to Proposition \ref{cohom}.

By means of a graded matrix $T\in \sss{GL}^0(\vect{r};A)$ representing $\zf$, the obtained map rewrites as a group action of $\sss{GL}^0(\vect{r};A)$ on $H^r(\cK^{\bullet})$.
In other words, we have a group morphism
\begin{equation}
\label{Phi}
 \zF: \sss{GL}^0(\vect{r};A)\to \iAut^0\lp H^r(\cK^{\bullet})\rp \simeq (A^0)^\ts
\end{equation}
given  by
$$
\zF(T)\lp[\zx_1 \cdots \zx_r]\rp=\lpq \zvf(\zx_1 \cdots \zx_r)\rpq\,.
$$
We will now prove that this morphism coincides with the Graded Berezinian.

\begin{prop}\label{propGber}
 For all $T\in \sss{GL}^0(\vect{r};A)$, $\zF(T) $ is the operator of right multiplication by $\gber(T)$.
\end{prop}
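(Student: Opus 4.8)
The plan is to invoke the uniqueness statement for $\gber$ recalled in Section~\ref{GBER}. Since $\zF$ is a group morphism (noted above in~(\ref{Phi})) and $A^0$ is commutative (degree-$0$ elements are central by graded-commutativity), the identification $\iAut^0(H^r(\cK^\bullet))\simeq(A^0)^\times$ is precisely right multiplication; writing $\zF(T)$ as right multiplication by $b(T)\in(A^0)^\times$, the assignment $T\mapsto b(T)$ is itself a group homomorphism $\sss{GL}^0(\vect{r};A)\to(A^0)^\times$. Hence it suffices to check that $b$ satisfies the two characterizing properties of $\gber$, namely its value on block-diagonal matrices and its triviality on block-unitriangular matrices; uniqueness then forces $b=\gber$, and right multiplication by $b(T)$ agrees with right multiplication by $\gber(T)$ as claimed.

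The computational backbone is a reduction to the top generator. I would first observe that $\zvf$ preserves the total polynomial weight, being linear on the generators $\zP e_i,\ze^i$, so $\omega:=\zvf(\zx_1\cdots\zx_r)$ is homogeneous of weight $r$; decomposing $\omega=\sum_k\omega_k$ by $\zx$-degree $k$, the piece $\omega_k$ has $x$-degree $r-k$. Because $\zvf$ commutes with $d$ and $\zx_1\cdots\zx_r$ is a cocycle, $\omega$ is a cocycle, and since $d$ raises the $\zx$-degree by exactly one, $d\omega=\sum_k d\omega_k$ is a sum over distinct degrees, forcing each $\omega_k$ to be separately a cocycle. By Proposition~\ref{cohom} every $\omega_k$ with $k<r$ is exact, while $\omega_r$ (of $x$-degree $0$) lies in $\zx_1\cdots\zx_r\cdot A$ and is not a coboundary. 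Thus $[\omega]=[\omega_r]$, so $b(T)$ is read off from $\omega_r$ alone. As each $\zvf(\zx_i)$ is a linear combination of generators, the $\zx$-degree-$r$ part of $\prod_i\zvf(\zx_i)$ retains only the $\zx$-linear contributions: writing $\zvf(\zx_i)=\sum_j \zx_j\,P^j_i+(\text{terms involving some }x)$, one obtains $\omega_r=\zx_1\cdots\zx_r\cdot\det(P)$, whence $b(T)=\det(P)$, where $P$ is the matrix of the induced linear map on the odd variables $\zx_i$.

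It remains to evaluate $\det(P)$ in the two required cases, using the explicit action~(\ref{groupAction}) and the change of variables~(\ref{changenotation}). For even indices $i\le r'$ one has $\zx_i=\zP e_i$ and $\zvf$ acts by $T$, so its $\zx$-linear part couples $\zx_i$ only to the even $\zx_j$; for odd indices $i>r'$ one has $\zx_i=\pm\ze^i$ and $\zvf$ acts by $\gt(T^{-1})$, coupling $\zx_i$ only to the odd $\zx_j$, with the parity-shift signs of~(\ref{changenotation}) cancelling within each diagonal block. Hence $P$ is block-diagonal with respect to the even/odd splitting, its even part being the even-block submatrix of $T$ and its odd part the odd-block submatrix of $\gt(T^{-1})$. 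For block-diagonal $T$ this yields $\det(P)=\prod_{u\le q}\det(T_{uu})\cdot\prod_{u>q}\detinv(T_{uu})$, which is property~(1). For a lower (resp.\ upper) block-unitriangular $T$, both the even-block submatrix of $T$ and the odd-block submatrix of $\gt(T^{-1})$ are unitriangular, so $\det(P)=1$, which is property~(2).

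I expect the main obstacle to be the sign bookkeeping of the third paragraph: verifying that the product of the $\zx_j$ with their right coefficients $P^j_i$ collapses to $\zx_1\cdots\zx_r\det(P)$ with the correct determinant (in particular that the odd blocks contribute $\detinv$ rather than $\det$, via $\det\transp(T_{uu}^{-1})=\detinv(T_{uu})$), and that the parity signs $-(-1)^{\la\degr{e}_i,\zp\ra}$ from~(\ref{changenotation}) cancel pairwise within each diagonal block. The conceptual crux is property~(2): it holds only after passing to cohomology, since the $x$-dependent terms produced by the off-diagonal blocks of a unitriangular $T$ are exactly the coboundaries killed in $H^r(\cK^\bullet)$, so that the naive ``top coefficient'' is corrected to $1$.
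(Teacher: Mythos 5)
Your proof is correct, and its skeleton is the same as the paper's: use the group-morphism property of $\zF$ together with the uniqueness characterization of $\gber$ recalled in Section~\ref{GBER}, then verify the two defining properties by evaluating $\zvf(\zx_1\cdots\zx_r)$ on block-diagonal and block-unitriangular matrices. The genuine difference is your second paragraph, the reduction of $[\zvf(\zx_1\cdots\zx_r)]$ to its top $\zx$-degree component. The paper works at the cochain level: in the block-diagonal case and in the \emph{upper} unitriangular case the relevant columns of $T$ and of $\gt(T^{-1})$ produce no $x$-terms at all, so $\zvf(\zx_1\cdots\zx_r)$ equals $\zx_1\cdots\zx_r\,\gber(T)$ as a polynomial, and the paper then dismisses the lower unitriangular case as ``similar''. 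It is not literally similar: for a lower block-unitriangular $T$, $x$-dependent terms do appear in $\zvf(\zx_i)$, the product is no longer equal to $\zx_1\cdots\zx_r$ on the nose, and one needs precisely your argument --- $\zvf$ preserves weight, $d$ is homogeneous of $\zx$-degree $+1$, so each homogeneous component of the cocycle $\zvf(\zx_1\cdots\zx_r)$ is itself a cocycle, and Proposition~\ref{cohom} kills the components of degree $<r$ --- to conclude that the class is $[\zx_1\cdots\zx_r]$. So your lemma is not a stylistic variant; it is the step that makes the unitriangular verification rigorous and uniform in both triangular types, and it replaces the paper's argument-by-example in the general $(\Z_2)^n$ case. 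One caveat: the formula $\omega_r=\zx_1\cdots\zx_r\det(P)$ should not be asserted for arbitrary $T$, since the entries of $P$ carry nonzero degrees and do not commute, so the top coefficient is a sign-laden permutation sum rather than a determinant; but in the only two cases you invoke it (block-diagonal, where each diagonal block has degree-zero, hence central, entries and the $\zx$'s of a block anticommute; and unitriangular, where only the identity permutation contributes), the identifications with $\prod_{u\le q}\det(T_{uu})\cdot\prod_{u>q}\detinv(T_{uu})$ and with $1$ are valid, which is all the uniqueness argument requires.
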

\begin{proof}
We will first explicit in detail the proof for the super case (i.e. with grading group $\Z_2$),
following the description given in \cite{Man88}.
 Hence, in this case the graded Berezinian and the graded trace reduce to the classical Berezinian and the supertrace.

Let us consider two particular types of transformations $\zf$.
\begin{enumerate}
\item Let $\zf$ be a diagonal transformation, i.e. the corresponding graded matrix  is block-diagonal.
$$  T=\lp\begin{array}{cc} A&0\\0&B \end{array}\rp\in \sss{GL}^0\lp\vect{r}=(r',r'');A\rp $$
The matrix corresponding to the inverse dual is also block-diagonal
$$ ^{st}\! (T^{-1})=\lp\begin{array}{cc} ^{t}\! A^{-1}&0\\0&^{t}\! B^{-1} \end{array}\rp\in \sss{GL}^0(\vect{r};A) $$
Let us denote $a^{i}_{\;j}$ the entries of $A$ and $\f{b}^{\,\;i}_{j}$ the entries of $^{t}\!B^{-1}$.

We have that
\beas
\zvf(\zx_i)=\begin{dcases} \zf^{\zP}(\zx_i)=\sum\nolimits_{1\leq j \leq r'} \zx_{j} a^j_{\;i} & \mbox{ for } 1\leq i \leq r'\\
(\zf^{*})^{-1}(\zx_i) =\sum\nolimits_{r'< j \leq r} \zx_{j} \f{b}^{\; \, i-r'}_{ j-r'}  & \mbox{ for } r'< i \leq r \end{dcases}
\eeas
so that
\beas
\zvf(\zx_1\cdots \zx_r)&=&\zf^{\zP}(\zx_{1})\cdots \zf^{\zP}(\zx_{r'}) \cdot (\zf^{*})^{-1}(\zx_{r'+1}) \cdots (\zf^{*})^{-1}(\zx_{r}) \\
&=& \zx_{1}\cdots \zx_{r} \; \Bigg(\sum_{\zs \in \f{S}_{r'}}\op{sign}\zs \; a^{\zs(1)}_{\;\,1}\cds a^{\zs(r')}_{\;\,r'} \Bigg) \Bigg(\sum_{\zs \in \f{S}_{r-r'}}\op{sign}\zs \; \f{b}_{\zs(1)}^{\;1}\cds \f{b}_{\zs(r-r')}^{\;r-r'}\Bigg) \\
&=& \zx_{1}\cdots \zx_{r}\; \det(A)\det(B^{-1}) \\
&=&\zx_{1}\cdots \zx_{r}\;\ber(T) \; .
\eeas
 Here, signs appears because the elements of the subset $\{\zx_i\}_{1\leq i \leq r'}$ (respectively, $\{\zx_i\}_{r'< i \leq r}$) are of the same odd degree, hence
anticommute.
\item Let $\zf$ be a unitriangular transformation, i.e. the corresponding graded matrix is block-unitriangular. The value of its Berezinian then equals $1$.We will consider only the case of an upper unitriangular matrix, the case of a lower unitriangular matrix being similar.
Let $$T=\lp\begin{array}{cc} \I &C\\0&\I \end{array}\rp\in \sss{GL}^0(\vect{r};A).$$ Then the corresponding dual inverse is also block-unitriangular, more precisely $$^{st}\!( T^{-1})= \lp\begin{array}{cc} \I &0 \\ ^t\!C&\I \end{array}\rp\;.$$
We have in this case
\beas
\zvf(\zx_i)=\begin{cases} \zf^{\zP}(\zx_i)=  \zx_i & \mbox{ for } 1\leq i \leq r'\\
(\zf^{*})^{-1}(\zx_i) = \zx_i & \mbox{ for } r'< i \leq r \end{cases}
\eeas
so that
$$ \zvf(\zx_1\cdots \zx_r)= \zx_{1}\cdots \zx_{r} = \zx_{1}\cdots \zx_{r} \; \ber(T). $$
\end{enumerate}

Hence, we have proved that $\zF$ coincide with left multiplication by $\ber$ on block diagonal and block unitriangular matrices. By the uniqueness result concerning the  Berezinian, this suffices to conclude.

\medskip
This strategy of proof generalizes to the case of grading group $(\Z_2)^n$ for higher $n \in \N$, thanks to the analogous uniqueness result of the graded Berezinian (see section \ref{GBER}). We hence only have to verify 1. and 2. in this multigraded case.

Let $M$ be, as usual, a free module of rank $\vect{r}=(r_1,r_2,\ldots , r_N)$. The odd elements $\zx_i$ are by construction ordered by degree, so that in each subset  $\{\zx_i\}_{\sum_{\za<\zg}r_{\za}< i \leq \sum_{\za\leq \zg}r_{\za}}$ the elements are of the same odd degree, hence they anticommute. This implies that in the first step the expected signs (and hence the determinants) appears, as in the super case.
In the second step, by definition of the graded transpose (see section \ref{Sec2}), we still have that if $T$ is a block upper (resp. lower) unitriangular matrix then $\gt(T^{-1})$ is lower (resp. upper) block unitriangular. Let us consider, for simplicity, $n=2$ and $\vect{r}=(1,1,1,1)$. We then have only four odd $\zx_i$, of two different degrees $(0,1)$ and $(1,0)$. For a graded matrix
$$ T= \lp\begin{array}{cccc} 1&a&\star& \star\\ &1&\star&\star \\ & & 1& b \\ & & & 1 \end{array}\rp \; \in \sss{GL}^0\lp (1,1,1,1);A \rp$$
we have
$$\gt( T^{-1})= \lp\begin{array}{cccc} 1& & &\\ -a&1& & \\ \star'&\star' & 1&  \\ \star'&\star' & b & 1 \end{array}\rp $$
We then obtain
$$\begin{array}{rclrcl}
\zf^{\zP}(\zx_1)&=&  \zx_1 \;,\qquad &\qquad (\zf^{*})^{-1}(\zx_3)&=& \zx_3 + \zx_4 b \;,\\
 \zf^{\zP}(\zx_2) & = & \zx_1 a + \zx_2 \;,\qquad & \qquad (\zf^{*})^{-1}(\zx_4)&=& \zx_4\;,
\end{array}$$
so that
$$ \zvf(\zx_1\cdots \zx_4)=\zx_1(\zx_1 a + \zx_2)(\zx_3 + b\zx_4)\zx_4 = \zx_1 \cdots \zx_4 $$
since the $\zx_i$-s square to zero.

This clearly generalize to arbitrary $\vect{r}$ and arbitrary $n$.
\end{proof}

We summarize the above statements as follows.
\begin{thm}
The map
\beas
\zc : &  \gber(M)  \; \raa & \;H(\cK^{\bullet})\\
			& \;\, \op{B}(\{e_i\}) \; \mapsto & [\zx_1 \cds \zx_r]
\eeas
 is an $A$-module isomorphism of degree $r'\zp$.
\end{thm}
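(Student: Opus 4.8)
The plan is to reduce everything to the observation that both modules are free of total rank $1$: $\gber(M)$ by its very definition, and $H(\cK^\bullet)=H^r(\cK^\bullet)$ by Proposition \ref{cohom}. Since a homogeneous $A$-linear map between free rank-$1$ modules that carries one basis element to another is automatically invertible (its inverse is prescribed on the image generator and extended $A$-linearly), the substance of the statement is not bijectivity but two checks: that the rule $\op{B}(\{e_i\})\mapsto[\zx_1\cds\zx_r]$ is \emph{well defined}, i.e.\ compatible with changes of the standard basis used on either side, and that it is \emph{homogeneous of degree} $r'\zp$.

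The core of the argument, which I would carry out first, is compatibility with change of basis. Fix a standard basis $\{e_i\}$ and abbreviate $\Xi:=\zx_1\cds\zx_r$ (and $\Xi'$ for the analogous product built from a second standard basis $\{e'_i\}$ via (\ref{changenotation})). Let the two bases be related by $e'_i=\sum_j e_j\,t^j_{\;i}$, with $T\in\sss{GL}^0(\vect{r};A)$ and associated automorphism $\zf\in\sss{Aut}^0(M)$. I would compare the change-of-basis formulas for $\{\zP e_i\}$ and $\{\ze^i\}$ obtained in the proof of Proposition \ref{differential} (equation (\ref{groupaction})) with the definition (\ref{groupAction}) of the algebra automorphism $\zvf$, to conclude that $\zvf$ sends each variable $\zx_i$ precisely to the corresponding primed variable $\zx'_i$; the sign factors in (\ref{changenotation}) cancel against those of the graded transpose, so no spurious sign remains. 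As $\zvf$ is multiplicative and the two standard bases share the same degrees (hence the same ordering), this gives $\zvf(\Xi)=\Xi'$. Feeding this into Proposition \ref{propGber} produces
$$
[\Xi']=[\zvf(\Xi)]=\zF(T)\big([\Xi]\big)=[\Xi]\,\gber(T),
$$
which is exactly the transformation law (\ref{trel1}) obeyed by the formal generators $\op{B}(\{e_i\})$ of $\gber(M)$. Consequently the assignment $\op{B}(\{e_i\})\mapsto[\Xi]$ respects all the defining relations of $\gber(M)$ and extends $A$-linearly to a well-defined morphism $\zc$, which, being generator-to-generator, is an isomorphism.

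Finally I would settle the degree. The generator $\op{B}(\{e_i\})$ carries the degree of the classical top exterior power, $\sum_{i=1}^N\zg_i r_i$ (the convention under which $\gber(M)$ specializes to $\op{Det}(M)$), whereas by Remark \ref{degHK}(a) the class $[\zx_1\cds\zx_r]$ has degree $r'\zp+\sum_{i=1}^N\zg_i r_i$; hence $\zc$ uniformly raises degree by $r'\zp$ and is homogeneous of that degree, independently of the chosen parity functor $\zP$ by Remark \ref{degHK}(b). The one delicate point — and the step I expect to be the main obstacle — is the identification $\zvf(\Xi)=\Xi'$: it rests entirely on the sign bookkeeping built into the graded transpose (\ref{gt}) and into (\ref{changenotation}) matching up so that $\zvf$ acts on the dual variables by exactly $\gt(T^{-1})$, leaving the top product intact. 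Once that is verified, the theorem follows formally from Propositions \ref{cohom} and \ref{propGber}.
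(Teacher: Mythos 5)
Your proposal is correct and follows essentially the paper's own route: the paper gives no separate proof of this theorem but states it as a summary of Proposition \ref{cohom} (rank-one freeness of $H(\cK^{\bullet})$), Proposition \ref{propGber} (the action $\zF(T)$ is right multiplication by $\gber(T)$, matching the transformation law (\ref{trel1})), and Remark \ref{degHK} (the degree computation), which is exactly the assembly you carry out. Your explicit verification that $\zvf$ sends each $\zx_i$ to the corresponding primed variable $\zx'_i$ (the prefactors in (\ref{changenotation}) being identical for both bases since $\degr{e'}_i=\degr{e}_i$) is the right way to make the well-definedness step rigorous.
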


The degree of the isomorphism $\zc$ can be easily understood. Indeed, the $A$-module $\gber(M)$ is then either purely even or purely odd, and this depends only on the parity of  $\sum_{i=r'+1}^N \zg_i r_i$. This corresponds, when $n=1$, exactly to the well-known situation of the classical Berezinian.

\section{Cohomological Definition of the Graded Trace}\label{Sec4}

As we have seen in the previous section, the assignment
$\sss{GL}^0(\vect{r};A) \ni T \mapsto  \zvf_T \in \Aut^0\lp \cK \rp$,
where $\zvf_T$ is an automorphism of $A$-algebra corresponding to matrix multiplication by $T$ on $\zP M$ and by $\gt (T^{-1})$ on $M^*$,
defines  a group action for which the differential $d$ is invariant. It induces a group morphism (\ref{Phi}) which coincides with right multiplication by $\gber$.

In this section, we consider analogously the action of the colored Lie algebra of infinitesimal automorphisms
on the complex $\cK$.
We obtain the graded trace from the action on $\ber(M)$.
\medskip

\subsection{General Construction of the Action}

Consider, as before, the graded-commutative algebra $\cK = S_A^{\bullet}(\zP M \opl M^*)$, where $M$ is a free $A$-module  of rank $\vect{r}$ over a graded-commutative algebra $A$.
To any homogeneous square matrix $S \in \sss{M}(\vect{r};A)$ of degree $\degr{S}$ we can associate a graded derivation of the same degree $L_S \in \op{Der}^{\degr{S}}(\cK)$.
$L_S$ is given by matrix multiplication by $S$ on $M$ and by matrix multiplication by $-\gt\! S$ on~$M^*$.
Note that since $\cK$ has also an $A$-module structure, it is natural to restrict ourselves only to derivations that are also $A$-module morphisms, i.e.
$$ \op{Der}_A(\cK) := \{ D \in \op{Der}\lp \cK\rp \; : \; D(a)=0, \; \forall a\in A \} \subset \iEnd_A(\cK) \;.$$

Hence, more explicitly, we have an assignment
\bea \label{algaction}
L:  \sss{M}(\vect{r};A)\ni S \mapsto L_S \in \op{Der}_A\lp \cK\rp
\eea
where, for any homogeneous matrix $S$, $L_S \in \iEnd_A(\cK)$ is given on generating elements by
\begin{align}
L_S(\zP e_i)&= \sum_k \zP e_k \, s^k_{\,\;i} \tag{\theequation a} \label{Ls1} \\
\mbox{and} \qquad L_S(\ze^i)&= - \sum_k \ze^k (-1)^{\la \degr{e}_i+\degr{e}_k, \degr{S}+\degr{e}_k \ra}s^i_{\,\;k}= -(-1)^{\la \degr{e}_i,\degr{S}\ra}\sum_k s^i_{\,\;k}\,\ze^k \tag{\theequation b}\label{Ls2}
\end{align}
and extends to arbitrary elements by the graded Leibniz rule
$$ L_S(ab)=L_S(a)\, b +(-1)^{\la \degr{S}, \degr{a} \ra} a\, L_S(b) \;, \quad \forall a,b \in \cK  \,.$$

In fact, $L$ is a colored Lie algebra morphism of degree $0$. Indeed,  we see that  the equality
$$ [L_S,L_T]=L_{[S,T]} $$
holds on $M$ by construction, and on $M^*$ it follows from (\ref{lem}) (Corollary \ref{lemma}, p. 8).

\medskip

\subsection{Deducing the Graded Trace}

The second main result of this paper is as follows.

\begin{thm}
\label{MainTwo}
Given an even matrix $S\in \sss{M}\evp(\vect{r};A)$, its action (by derivation) on the cohomology $H(\cK^{\bullet})$
is well-defined provided one of the following conditions is satisfied:
\begin{enumerate}
\item \label{case1}
$S \in \sss{M}^0(\vect{r};A)$ and the parity
of $\zP$ is an arbitrary odd element $\zp \in (\Z_2)^n \odp $;
\item \label{case2}
$S $ is an arbitrary even matrix, $n$ is an odd integer
and
$$
\zp =(1,1,\ldots ,1)\in (\Z_2)^n \odp.
$$
\end{enumerate}
In both cases, the action of $S$ coincides with the operator of left multiplication by~$\gtr(S)$.
\end{thm}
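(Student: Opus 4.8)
The plan is to reduce the well-definedness to the statement that $L_S$ is a cochain map, and then to evaluate the induced endomorphism on the single generator $[\zx_1\cds\zx_r]$ of $H^r(\cK^\bullet)$.

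First I would write $d$ as left multiplication by the degree-$\zp$ element $\omega:=\sum_i \zP e_i\,\ze^i$ and observe that, because $L_S$ is a derivation, its graded commutator with $d$ is again a multiplication operator: the graded Leibniz rule gives $[L_S,d](P)=L_S(\omega)\,P$ for every $P\in\cK$, so $L_S$ descends to $H^\bullet(\cK^\bullet)$ as soon as $L_S(\omega)=0$. Using (\ref{Ls1}) and (\ref{Ls2}) together with the cancellation $\la\degr{S},\degr{e}_i\ra+\la\degr{e}_i,\degr{S}\ra=0$ and a relabelling $i\leftrightarrow k$, I expect a direct computation to yield
\[
L_S(\omega)=\bigl(1-(-1)^{\la\degr{S},\zp\ra}\bigr)\sum_{i,k}\zP e_k\,s^k_{\;i}\,\ze^i ,
\]
so that $L_S$ is a cochain map precisely when $\la\degr{S},\zp\ra=0$. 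This is the crux of the two hypotheses: in case (\ref{case1}) one has $\degr{S}=0$, hence the condition holds for every odd $\zp$; in case (\ref{case2}) the choice $\zp=(1,\dots,1)$ gives $\la\degr{S},\zp\ra=\sum_i(\degr{S})_i=\la\degr{S},\degr{S}\ra=0$ for every even $S$, while the requirement that $n$ be odd is exactly what makes $(1,\dots,1)$ an odd element and thus a legitimate parity for $\zP$. (Conversely, $(1,\dots,1)$ is the only odd vector orthogonal to the whole even subgroup, which explains why these are the only possibilities.)

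Having established that $L_S$ acts on $H(\cK^\bullet)=H^r(\cK^\bullet)\simeq[\zx_1\cds\zx_r]\cdot A$ (Proposition \ref{cohom}), I would compute $L_S(\zx_1\cds\zx_r)$. Since $\zx_1\cds\zx_r$ is a cocycle and $L_S$ commutes with $d$, its image is again a cocycle; as $H^k(\cK^\bullet)=0$ for $k\neq r$ and $d$ raises the $\zx$-weight by one, each homogeneous component of weight $<r$ is a coboundary and each of weight $>r$ vanishes. Hence only the weight-$r$ part of $L_S(\zx_1\cds\zx_r)$ contributes to the class, and in that part only the \emph{diagonal} terms survive: any off-diagonal $\zx_k$ produced by $L_S(\zx_j)$ repeats a factor already present and is killed by $\zx_k^2=0$. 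Applying the Leibniz rule and moving the (even, degree-$\degr{S}$) diagonal entry $s^j_{\;j}$ to the far left, the accumulated signs collapse, using $\la\degr{S},\zp\ra=0$ and the notation change (\ref{changenotation}), to $(-1)^{\la\degr{S},\degr{e}_j\ra}$ when $e_j$ is even and to $-(-1)^{\la\degr{S},\degr{e}_j\ra}$ when $e_j$ is odd.

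Finally I would recognize these signs as exactly the block signs $(-1)^{\la\zg_u+\degr{S},\zg_u\ra}$ of the definition (\ref{trace}) of $\gtr$, since $\la\zg_u,\zg_u\ra=0$ on an even block and $\la\zg_u,\zg_u\ra=1$ on an odd one. Grouping $\sum_{j:\,\degr{e}_j=\zg_u}s^j_{\;j}=\tr(S_{uu})$ then gives $L_S(\zx_1\cds\zx_r)=\gtr(S)\cdot\zx_1\cds\zx_r$ modulo a coboundary, i.e.\ $L_S$ is left multiplication by $\gtr(S)$ on $H^r(\cK^\bullet)$, as claimed. The main obstacle will be the sign bookkeeping: keeping track of the three independent sources of signs (the graded Leibniz rule, the dictionary (\ref{changenotation}) relating the $\zx_i$ to the $\zP e_i$ and $\ze^i$, and the commutation of the scalar $s^j_{\;j}$ through the odd monomial) and verifying that, under the hypothesis $\la\degr{S},\zp\ra=0$, they combine into precisely the graded-trace signs.
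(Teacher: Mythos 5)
Your proposal is correct and follows essentially the same route as the paper's proof: reduce $[L_S,d]=0$ to $L_S(\omega)=0$ via the Leibniz rule, extract the obstruction $1-(-1)^{\la \degr{S},\zp\ra}$ to obtain exactly the two cases, then evaluate on $[\zx_1\cds\zx_r]$ where only the diagonal weight-$r$ terms survive and the signs assemble into the block signs of $\gtr(S)$. If anything, your justification for discarding the weight-$(r-1)$ terms (each weight component of the cocycle $L_S(\zx_1\cds\zx_r)$ is itself a cocycle, and those of weight $k\neq r$ are coboundaries since $H^{k}(\cK^{\bullet})=0$) is spelled out more explicitly than in the paper, which drops those terms without comment.
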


To prove the theorem, let us first determine the conditions sufficient for invariance of
 the differential $d:\, \cK \to \cK$ under the action 
 (\ref{algaction}).

\begin{lem}\label{propdinv}
$ [L_S,d]=0 $ if one of the above conditions \ref{case1} or \ref{case2} is satisfied.
\end{lem}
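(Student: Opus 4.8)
The plan is to verify that the graded commutator $[L_S,d]$ vanishes on the generating elements $\zP e_i$ and $\ze^i$, since $L_S$ and $d$ are both derivations (the latter being left-multiplication by the degree-$\zp$ central-type element $\sum_k \zP e_k \ze^k$, which acts as a derivation up to the graded sign rule), and a derivation is determined by its action on generators. First I would compute $[L_S,d]$ directly. Since $L_S$ is a derivation of degree $\degr{S}$ and $d$ is (left multiplication by) the element $D:=\sum_k \zP e_k\,\ze^k$ of degree $\zp$, the graded commutator is $[L_S,d]=L_S(D)\cdot(-)$, i.e.\ left multiplication by the element $L_S(D)$. So the whole computation reduces to showing $L_S(D)=0$ under the stated hypotheses.

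The key step is therefore to evaluate
\[
L_S(D)=\sum_k \Big( L_S(\zP e_k)\,\ze^k + (-1)^{\la \degr{S},\zp\ra}\,\zP e_k\,L_S(\ze^k)\Big),
\]
plugging in the explicit formulas (\ref{Ls1}) and (\ref{Ls2}). Using $L_S(\zP e_k)=\sum_j \zP e_j\, s^j_{\;k}$ and $L_S(\ze^k)=-(-1)^{\la \degr{e}_k,\degr{S}\ra}\sum_j s^k_{\;j}\,\ze^j$, and carefully commuting the scalars $s^j_{\;k}$ past the odd generators $\zP e_j$ and $\ze^k$ via the graded-commutativity (\ref{GCom}), I expect the two sums to produce the same monomials $\zP e_j\,\ze^k$ with opposite-looking sign prefactors. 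The residual sign that prevents outright cancellation is a factor of the form $(-1)^{\la\degr{S}+\degr{e}_j+\degr{e}_k,\,\zp\ra}$ (or a closely related expression), arising from moving $\zP e_j$ and $s^j_{\;k}$ into position. This is where the two hypotheses enter: in case \ref{case1}, $\degr{S}=0$ and nonzero blocks force $\degr{e}_j=\degr{e}_k$, so the offending scalar product becomes $\la\zero,\zp\ra=0$ automatically; in case \ref{case2}, $\zp=(1,1,\ldots,1)$ so that $\la\zg,\zp\ra=\sum_i \zg_i = |\zg|\bmod 2$ computes the total parity of a degree, and since $\degr{S}$ is even and $\degr{e}_j+\degr{e}_k$ is the degree of the $(j,k)$-block (even whenever the block is nonzero, by the parity constraint on even matrices), the sign is again trivial, provided $n$ is odd so that $\zp=(1,\dots,1)$ really is odd, $\la\zp,\zp\ra=n=1$.

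The main obstacle I anticipate is bookkeeping the signs correctly: one must track the degree of the scalar entries $s^j_{\;k}$ (namely $\degr{S}+\degr{e}_j+\degr{e}_k$), the odd degrees of the generators, and the extra $(-1)^{\la\degr{S},\zp\ra}$ coming from the graded Leibniz rule applied to the product $\zP e_k\,\ze^k$, and confirm they combine into exactly the expression above. Once the residual sign is isolated as $(-1)^{\la \degr{S}+\degr{e}_j+\degr{e}_k,\,\zp\ra}$, the verification that each hypothesis kills it is a short case analysis as sketched. I would close by remarking that the necessity of $n$ odd in case \ref{case2} is precisely what forces $(1,1,\ldots,1)$ to lie in $(\Z_2)^n\odp$, which is the phenomenon the introduction advertises, so the computation both proves invariance and explains the parity restriction.
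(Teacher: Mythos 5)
Your overall strategy is the same as the paper's: observe that $[L_S,d]$ is left multiplication by the element $L_S(D)$, where $D=\sum_i \zP e_i\,\ze^i$, compute $L_S(D)$ via the graded Leibniz rule and the explicit formulas (\ref{Ls1})--(\ref{Ls2}), isolate the residual sign, and check it vanishes in the two cases. However, your sign bookkeeping goes wrong at the first step, and the error propagates into the case analysis. The Leibniz sign on the second term is $(-1)^{\la \degr{S},\,\degr{e}_i+\zp\ra}$ (the degree of $\zP e_i$ is $\degr{e}_i+\zp$), not $(-1)^{\la \degr{S},\zp\ra}$ as you wrote. When this is combined with the factor $-(-1)^{\la \degr{e}_i,\degr{S}\ra}$ already built into (\ref{Ls2}), the two occurrences of $\degr{e}_i$ cancel and one finds
$$
L_S(D)=\Big(1-(-1)^{\la \degr{S},\,\zp\ra}\Big)\sum_{u,v}\zP e_u\, s^u_{\;v}\,\ze^v\,,
$$
so the residual sign is the \emph{uniform} factor $(-1)^{\la \degr{S},\zp\ra}$, with no dependence on $\degr{e}_j+\degr{e}_k$ whatsoever. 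The vanishing condition is then simply $\la \degr{S},\zp\ra=0$: in case \ref{case1} this is immediate from $\degr{S}=0$ (for any odd $\zp$), and in case \ref{case2} it holds because an even degree satisfies $\la \degr{S},\degr{S}\ra=\sum_i (\degr{S})_i=0$ in $\Z_2$, i.e.\ $\degr{S}$ has an even number of nonzero entries, whence $\la \degr{S},(1,\ldots,1)\ra=0$; the hypothesis that $n$ is odd is needed only so that $(1,\ldots,1)$ is a legitimate odd parity, as you correctly note.

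The genuine gap is in how you dispose of your spurious index-dependent sign $(-1)^{\la \degr{S}+\degr{e}_j+\degr{e}_k,\,\zp\ra}$: you claim that for a degree-zero matrix ``nonzero blocks force $\degr{e}_j=\degr{e}_k$,'' and for an even matrix that nonzero blocks force $\degr{e}_j+\degr{e}_k$ to be even. Both claims are false, and they are load-bearing in your write-up (without them your case \ref{case1} would fail for a generic odd $\zp$). Homogeneity of a graded matrix does not make any block vanish; it only prescribes the degree of the entries in block $(u,v)$, namely $\zg_u+\zg_v+\degr{S}$, and these entries are in general nonzero elements of $A$ of that (possibly odd) degree --- think of $A=\qH$ or any Clifford algebra, which has nonzero components in many degrees. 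So the proof as you have written it does not go through; once the Leibniz sign is corrected, the index-dependent term disappears, the block claims become unnecessary, and the argument closes exactly as in the paper.
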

    \begin{proof}
    By definition, the operator $d$ is left multiplication by $\sum_i \zP e_i\, \ze^i$. Hence,  $[L_S,d]=0$ is equivalent to $L(\sum_i \zP e_i\, \ze^i)=0$.

    From (\ref{Ls1}-b), we have that
    \beas
L_S\Big( \sum_i \zP e_i\, \ze^i\Big)&=&
    \sum_i \lp L_S(\zP e_i)\, \ze^i + (-1)^{\la \degr{S}, \degr{e}_i+\zp\ra} \zP e_i\, L_S(\ze^i)\rp\\
    &=& \sum_{i,k}  \zP e_k \, s^k_{\,\;i}\, \ze^i - (-1)^{\la \degr{S}, \zp\ra} \sum_{i,k} \zP e_i\, s^i_{\,\;k}\ze^k \\
    &=& \sum_{u,v} \zP e_u \, s^u_{\;\,v} \, \ze^v   \lp 1-(-1)^{\la \degr{S},\zp \ra}\rp
\eeas
which is equal to zero if and only if
\be \label{condinv} \la \degr{S},\zp \ra=0\;. \ee
In particular this holds whenever $\degr{S}=(0,0,\ldots ,0)$
(then $\zp$ is arbitrary).
On the other hand, assuming that $n$ is an odd integer and $\zp=(1,1,\ldots ,1)$,
the equality (\ref{condinv}) then also holds for every 
homogeneous matrix of even (not necessarily zero) degree.
    \end{proof}

Consequently, in the two cases of the above proposition, $L$ induces an action (respectively denoted by $\cL^{(0)}$ and $\cL$) on the cohomology $ H(\cK^{\bullet})=H^r(\cK^{\bullet})$.
In other words, we have in the first case an algebra morphism
$$
 \cL^{(0)}: \sss{M}^0(\vect{r};A)\to \iEnd^0\lp H^r(\cK^{\bullet})\rp \simeq A^0
$$
and in the second case (i.e. when $n$ is odd) an algebra morphism
$$
 \cL: \sss{M}\evp(\vect{r};A)\to \iEnd\evp\lp H^r(\cK^{\bullet})\rp \simeq A\evp   \;,
$$
both given  by
$$
L_S\lp [\zx_1 \cdots \zx_r] \rp := [L_S\lp \zx_1 \cdots \zx_r \rp]\;.
$$
Here, we use again the $(x,\zx)$'s notation introduced in (\ref{changenotation}). With this notation, (\ref{Ls1}-b) give in particular
\[
 L_S(\zx_i)=\begin{dcases}
                 \sum\nolimits_{k=1}^{r'} \zx_k s^{k}_{\;\;i} + \sum\nolimits_{k=r'+1}^{r} x_k s^{k}_{\;\;i}  & \quad \mbox{ if } 1\leq i \leq r' \\
                  (-1)^{\la \degr{S}+\zp,\degr{e}_i \ra} \lp \sum\nolimits_{k=1}^{r'}  s^{k}_{\;\;i}x_k - \sum\nolimits_{k=r'+1}^{r} (-1)^{\la \degr{e}_k,\zp\ra} s^{k}_{\;\;i}\zx_k \rp  & \quad \mbox{ if } r'< i \leq r \\
\end{dcases}
\]
so that
\beas
L_S\lp [\zx_1 \cdots \zx_r] \rp &=& [L_S\lp \zx_1 \cdots \zx_r \rp]\\
    &=& \lpq \sum_{i=1}^r (-1)^{\la \degr{S}, \sum_{j<i} \degr{\zx}_j \ra} \zx_1 \cdots \zx_{i-1}L_S(\zx_i)\zx_{i+1}\cdots \zx_r \rpq  \nonumber \\
    &=& \sum_{i=1}^{r'}\sum_{k=1}^{r'} (-1)^{\la \degr{S}, \sum_{j<i}  \degr{\zx}_j \ra}  [ \zx_1 \cdots \zx_{i-1}\zx_k s^k_{\;\,i}\zx_{i+1}\cdots \zx_r ] \nonumber \\
    & &  + \sum_{i=1}^{r'} \sum_{k=r'+1}^{r} (-1)^{\la \degr{S}, \sum_{j<i}  \degr{\zx}_j \ra}  [ \zx_1 \cdots \zx_{i-1}x_k s^k_{\;\,i}\zx_{i+1}\cdots \zx_r ] \quad \nonumber \\
    & &  + \sum_{i=r'+1}^{r}\sum_{k=1}^{r'} (-1)^{\la \degr{S}, \sum_{j<i}  \degr{\zx}_j \ra+\la \degr{S} +\zp,\degr{e}_i \ra }  \lpq \zx_1 \cdots \zx_{i-1} s^i_{\;\,k} x_k \zx_{i+1}\cdots \zx_r \rpq   \nonumber \\
    & &  - \sum_{i=r'+1}^{r}\sum_{k=r'+1}^{r} (-1)^{\la \degr{S}, \sum_{j<i}  \degr{\zx}_j \ra+\la \degr{S}+\zp,\degr{e}_i \ra + \la \degr{e}_k, \zp \ra}  \lpq \zx_1 \cdots \zx_{i-1} s^i_{\;\,k} \zx_k \zx_{i+1}\cdots \zx_r \rpq  \nonumber \\
    &=& \sum_{i=1}^{r'} (-1)^{\la \degr{S}, \sum_{j<i} \degr{\zx}_j \ra} \lpq \zx_1 \cdots\zx_{i-1}\zx_i s^i_{\,\;i}\zx_{i+1}\cdots \zx_r\rpq  \nonumber \\
    & &   - \sum_{i=r'+1}^r (-1)^{\la \degr{S}, \sum_{j<i} \degr{\zx}_j \ra + \la \degr{S},\degr{e}_i\ra} \lpq\zx_1 \cdots \zx_{i-1}s^i_{\,\;i} \zx_i \zx_{i+1}\cdots \zx_r\rpq \nonumber  \\
    &=& \sum_{i=1}^{r'} (-1)^{\la \degr{S},  \degr{\zx}_i \ra} s^i_{\,\;i} \lpq \zx_1 \cdots \zx_r \rpq - \sum_{i=r'+1}^r (-1)^{\la \degr{S}, \degr{e}_i \ra} s^i_{\,\;i} \lpq \zx_1 \cdots \zx_r \rpq \nonumber \\
    &=& \sum_{i=1}^{r'} (-1)^{\la \degr{S},  \degr{e}_i+\zp \ra +\la \degr{e}_i,\degr{e}_i \ra} s^i_{\,\;i} \lpq \zx_1 \cdots  \zx_r\rpq  +\sum_{i=r'+1}^r (-1)^{\la \degr{S}, \degr{e}_i \ra+\la \degr{e}_i,\degr{e}_i \ra} s^i_{\,\;i} \lpq \zx_1 \cdots \zx_r\rpq \label{LYgtr}
\eeas
Clearly, if we are in one of the two cases described in Theorem \ref{MainTwo}, this rewrites as
\beas L_S\lp[ \zx_1 \cdots \zx_r] \rp =\lp \sum_{i=1}^r (-1)^{\la\degr{e}_i+\degr{S},\degr{e}_i\ra}s^i_{\,\;i} \rp [\zx_1 \cdots \zx_r]=  \gtr(S) \, [\zx_1 \cdots \zx_r] \;.\eeas

Theorem \ref{MainTwo} is proved.



\end{document}